\makeatletter\@addtoreset {equation}{section}\makeatother
\newtheorem{theorem}{Theorem}
\newtheorem{lemma}{Lemma}
\theoremstyle{remark}
\newtheorem{remark}{Remark}
\theoremstyle{definition}
\theoremstyle{corollary}
\begin{document}

\title[Instability of $H^1$-stable peakons]{\bf Instability of $H^1$-stable peakons \\ in the Camassa--Holm equation}

\author{F\'{a}bio Natali}
\address[F. Natali]{Departamento de Matem\'{a}tica - Universidade Estadual de Maring\'{a}, Avenida Colombo 5790, CEP 87020-900, Maring\'{a}, PR, Brazil}
\email{fmanatali@uem.br}

\author{Dmitry E. Pelinovsky}
\address[D. Pelinovsky]{Department of Mathematics and Statistics, McMaster University,
Hamilton, Ontario, Canada, L8S 4K1}
\email{dmpeli@math.mcmaster.ca}

\keywords{Peakons, Camassa--Holm equation, characteristics, stability, instability}

\begin{abstract}
It is well-known that peakons in the Camassa--Holm equation are $H^1$-orbitally stable
thanks to the presence of conserved quantities and properties of peakons as constrained energy minimizers.
By using the method of characteristics, we prove that piecewise $C^1$ perturbations to peakons
grow in time in spite of their stability in the $H^1$-norm. We also show that the linearized stability analysis
near peakons contradicts the $H^1$-orbital stability result, hence passage from linear to nonlinear theory
is false in $H^1$.
\end{abstract}

\date{\today}
\maketitle

\section{Introduction}

The Camassa-Holm (CH) equation \cite{CH}
\begin{equation}
\label{CH}
u_t - u_{txx} + 3 u u_x = 2 u_x u_{xx} + u u_{xxx}, \quad x \in \mathbb{R}
\end{equation}
can be rewritten in the convolution form
\begin{equation}
\label{CHconv}
u_t + u u_x + \frac{1}{2} \varphi' \ast \left( u^2 + \frac{1}{2} u_x^2 \right) = 0, \quad x \in \mathbb{R},
\end{equation}
where $\varphi(x) = e^{-|x|}$ is the Green function satisfying $(1 - \partial_x^2) \varphi = 2 \delta$
with $\delta$ being Dirac delta distribution and $(f \ast g)(x) := \int_{\mathbb{R}} f(x-y) g(y) dy$ being the convolution operator.
It is clear that $\varphi \in H^s$ for $s < \frac{3}{2}$, where $H^s := H^s(\mathbb{R})$ is the standard Sobolev space
of squared integrable distributions equipped with the norm $\| f \|_{H^s} := \| \langle \cdot \rangle^s \hat{f} \|_{L^2}$
with $\langle x \rangle := \sqrt{1 + x^2}$ and $\hat{f}$ being the Fourier transform of $f$ on $\mathbb{R}$.

The purpose of function $\varphi$ is not only to rewrite the evolution problem for the CH equation (\ref{CH})
in the convolution form (\ref{CHconv}) that depends on the first derivative of $u$ in $x$ and does not depend
on its higher-order derivatives. In addition,
$\varphi$ expresses a particular family of solutions $u(t,x) = c \varphi(x-ct)$ with $c \in \mathbb{R}$
which are referred to as {\em peakons} (or {\em peaked solitary waves}). Indeed, the validity of
these peakons as solutions to the CH equation in the convolution form (\ref{CHconv}) can be checked
directly from the identity
\begin{equation}
- \varphi + \frac{1}{2} \varphi^2 + \frac{3}{4} \varphi \ast \varphi^2 = 0, \quad x \in \mathbb{R},
\end{equation}
which is piecewise $C^1$ on both sides from the peak at $x = 0$.

Cauchy problem for the CH equation (\ref{CH}) with the initial data $u_0 \in H^3$
was studied in the series of papers \cite{C1,CE1,CE2}. It was shown in Theorem 4.1 in \cite{C1} that
if $y_0 := (1 - \partial_x^2) u_0$ does not change
sign on $\mathbb{R}$, then the corresponding solution $u \in C(\mathbb{R}^+,H^3)$
exists globally in time. If there exists $x_0 \in \mathbb{R}$ such that
$(x-x_0) y_0(x) \geq 0$ on $\mathbb{R}$, the same conclusion applies (Theorem 4.4 in \cite{C1}),
whereas if $(x-x_0) y_0(x) \leq 0$ on $\mathbb{R}$, the local solution breaks in a finite time
in the sense that the slope of the solution becomes unbounded from below
in a finite time (Theorems 5.1 and 5.2 in \cite{C1}).

The condition $H^3$ on the initial data was used in \cite{C1,CE1,CE2}
to control the auxiliary quantity $y_0$ and to extend local solutions to global
solutions of the CH equation (\ref{CH}). Without these requirements, local well-posedness
of the Cauchy problem can be proven for initial data in $H^s$ for every $s > \frac{3}{2}$ \cite{LO,RB}
but cannot be pushed below and at $s = \frac{3}{2}$ because of lack of uniform
continuity of the local solution with respect to initial data and the norm inflation \cite{B,Molinet}.

Cauchy problem for the CH equation in the convolution form (\ref{CHconv}) with the initial data
in $H^1$ was studied in \cite{BC1} (similar results appear also in \cite{Holden})
by means of a coordinate transformation of the quasilinear equation to an equivalent semilinear system.
It was proven that the Cauchy problem for the equivalent semilinear system admits a unique global solution
(Theorem 1 in \cite{BC1}), which provides a global conservative solution to the CH equation
(Theorem 2 in \cite{BC1}) such that
\begin{equation}
\label{conservative-solution}
\left\{ \begin{array}{ll} u(t,\cdot) \in H^1 & \mbox{\rm for every \;} t \in \mathbb{R}^+, \\
\| u(t,\cdot) \|_{H^1} = \| u_0 \|_{H^1} & \mbox{\rm for almost every \;} t \in \mathbb{R}^+.
\end{array} \right.
\end{equation}
This global conservative solution is consistent with the two conserved quantities of the CH equation:
\begin{equation}
\label{conserved-quantities}
E(u) = \int_{\mathbb{R}} (u^2 + u_x^2) dx, \quad F(u) = \int_{\mathbb{R}} u (u^2 + u_x^2) dx.
\end{equation}
Moreover, it was proven in \cite{BCZ} that the global conservative solution
to the CH equation (\ref{CHconv}) is unique for every initial data in $H^1$.
Continuous dependence from initial data and local well-posedness of the weak solutions
\begin{equation}
\label{LWP-peakons}
u \in C((-T,T),H^1) \cap L^{\infty}((-T,T),W^{1,\infty}) \cap C^1((-T,T),L^2)
\end{equation}
to the CH equation (\ref{CHconv}) was established very recently in \cite{Linares}
for every $u_0 \in H^1 \cap W^{1,\infty}$, where $W^{1,\infty}$ is the Sobolev space
of functions with bounded first derivatives and $T > 0$ is a local existence time.

The previous study of stability of peakon solutions in \cite{CM,CS} relies on the existence of
conserved quantities (\ref{conserved-quantities}). It was proven in \cite{CM} that
the peakon $\varphi$ is a unique (up to translation) minimizer of $E(u)$ in $H^1$ subject to the constraint
$3 F(u) = 2 E(u)$ (Proposition 3.6 in \cite{CM}), where $E(\varphi) = 2$ and $F(\varphi) = \frac{4}{3}$.
Consequently, global smooth solutions $u \in C(\mathbb{R}^+,H^3)$ which are close to $\varphi$ in $H^1$
remains close to the translated orbit $\{ \varphi(\cdot - a) \}_{a \in \mathbb{R}}$
in $H^1$ for all $t > 0$ (Theorem 3.1 in \cite{CM}).

A different result on stability of peakons was proven in \cite{CS}, which we reproduce here.

\begin{theorem}\cite{CS}
\label{theorem-CS}
Assume existence of a solution $u \in C((0,T),H^1)$ to the CH equation (\ref{CH})
with either finite or infinite $T > 0$. For every small $\varepsilon > 0$, if the initial data satisfies
\begin{equation}
\label{H1-initial}
\| u_0 - \varphi \|_{H^1} < \left(\frac{\varepsilon}{3} \right)^4,
\end{equation}
then the solution satisfies
\begin{equation}
\label{H1-final}
\| u(t,\cdot) - \varphi(\cdot - \xi(t)) \|_{H^1} < \varepsilon, \quad t \in (0,T),
\end{equation}
where $\xi(t)$ is a point of maximum for $u(t,\cdot)$.
\end{theorem}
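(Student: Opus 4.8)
The plan is to work directly with the two conserved quantities $E$ and $F$ in (\ref{conserved-quantities}), which by (\ref{conservative-solution}) stay frozen at their initial values $E(u_0)$ and $F(u_0)$, together with one sharp inequality linking them to the peak height. Throughout, write $M := M(t) = \max_{x} u(t,x) = u(t,\xi(t))$; this maximum is attained and positive because $H^1 \hookrightarrow C_0(\mathbb{R})$ and $u$ is close to $\varphi$. The first step is an exact identity: since $(1-\partial_x^2)\varphi = 2\delta$, integration by parts gives $\int_{\mathbb{R}} (u \varphi_\xi + u_x \varphi_\xi')\,dx = 2u(t,\xi) = 2M$, where $\varphi_\xi := \varphi(\cdot-\xi)$, and hence, using $E(\varphi)=2$,
\begin{equation}
\label{plan-identity}
\| u - \varphi_\xi \|_{H^1}^2 = E(u) + E(\varphi) - 4M = E(u) - 4M + 2 .
\end{equation}
Thus the problem reduces to showing that $M$ stays close to $1$, since $E(u)=E(u_0)$ is close to $E(\varphi)=2$.

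To control $M$ I would prove the sharp inequality
\begin{equation}
\label{plan-key}
M\,E(u) - F(u) \geq \tfrac{2}{3} M^3 ,
\end{equation}
with equality for $\varphi$. The idea is to split $\mathbb{R}$ at the maximum point $\xi$ and rewrite the left-hand side as a sum of manifestly nonnegative pieces: expanding and integrating by parts (using $u \to 0$ at $\pm\infty$ and $u(t,\xi)=M$) gives
\begin{equation}
\label{plan-split}
M\,E(u) - F(u) - \tfrac{2}{3}M^3 = \int_{-\infty}^{\xi} (M-u)(u-u_x)^2\,dx + \int_{\xi}^{\infty} (M-u)(u+u_x)^2\,dx \geq 0 ,
\end{equation}
where nonnegativity is immediate since $M - u \geq 0$ everywhere. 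Evaluating at $\varphi$ returns $1\cdot 2 - \tfrac{4}{3} = \tfrac{2}{3}$, so (\ref{plan-key}) is indeed sharp at the peakon.

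Next, freeze $E(u)=E(u_0)=:2+\delta_E$ and $F(u)=F(u_0)=:\tfrac{4}{3}+\delta_F$ by conservation, and read (\ref{plan-key}) as $P(M) := \tfrac{2}{3}M^3 - M E(u) + F(u) \leq 0$. At the peakon values the cubic factors as $P(M)=\tfrac{2}{3}(M-1)^2(M+2)$, exhibiting a \emph{double} root at $M=1$; the perturbation therefore yields
\begin{equation}
\label{plan-cubic}
\tfrac{2}{3}(M-1)^2(M+2) \leq M\,\delta_E - \delta_F .
\end{equation}
Since $M \leq \tfrac{1}{\sqrt 2}\|u\|_{H^1} = \tfrac{1}{\sqrt 2}\sqrt{E(u_0)}$ is bounded and $M+2 \geq 2$, this gives $|M-1| \lesssim \sqrt{|\delta_E|+|\delta_F|}$. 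Finally, $E$ and $F$ are locally Lipschitz on $H^1$ (for $F$ via $H^1\hookrightarrow L^\infty$), so $|\delta_E|,|\delta_F| \lesssim \|u_0-\varphi\|_{H^1}$. Substituting these into (\ref{plan-identity}) via $\|u-\varphi_\xi\|_{H^1}^2 = 4(1-M)+\delta_E$ produces a quarter-power bound $\|u-\varphi_\xi\|_{H^1}^2 \lesssim \sqrt{\|u_0-\varphi\|_{H^1}}$; with the hypothesis $\|u_0-\varphi\|_{H^1} < (\varepsilon/3)^4$ and careful bookkeeping of the constants (this is precisely what fixes the power $4$ and the factor $3$) one concludes $\|u-\varphi_\xi\|_{H^1} < \varepsilon$.

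I expect the main obstacle to be establishing and exploiting (\ref{plan-key})–(\ref{plan-split}): the decisive structural fact is that the peak height $M$ is pinned by the conserved quantities through a cubic whose root at $M=1$ is \emph{double}. It is this degeneracy that forces only a square-root modulus of continuity for $M$ in terms of $(\delta_E,\delta_F)$ and hence, after the extra square root incurred in passing between $\|\cdot\|_{H^1}^2$ and $\|\cdot\|_{H^1}$, the fourth-power smallness demanded of the data. A secondary technical point is the reduced regularity of the conservative solution $u(t,\cdot)\in H^1$: one must justify the integrations by parts in (\ref{plan-split}) by density and confirm that the maximum is genuinely attained, both of which follow from $H^1\hookrightarrow C_0(\mathbb{R})$.
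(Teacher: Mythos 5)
Your proposal is correct and is essentially the same argument as the paper's source for this theorem (the paper does not reprove it but cites Constantin--Strauss [CS]): the identity $\| u - \varphi(\cdot-\xi)\|_{H^1}^2 = E(u) + 2 - 4M$ obtained from $(1-\partial_x^2)\varphi = 2\delta$, the sharp inequality $M E(u) - F(u) \geq \tfrac{2}{3}M^3$ proved by splitting at the maximum with the weights $(u \mp u_x)^2$, and the double-root cubic argument pinning $M$ near $1$ are exactly the ingredients of that proof. The constant bookkeeping you defer is precisely what yields the threshold $(\varepsilon/3)^4$ in [CS], so nothing essential is missing.
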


In Theorem \ref{theorem-CS}, the local solution
$u \in C((0,T),H^1)$ to the CH equation (\ref{CH}) may break in a finite time $T < \infty$
in the sense of Theorem 5.1 in \cite{C1}:
\begin{equation}
\label{blow-up-criterion}
u_x(t,x) \to -\infty \quad \mbox{\rm at some} \;\; x \in \mathbb{R} \quad \mbox{\rm as} \;\; t \nearrow T.
\end{equation}
Nevertheless, the $H^1$-norm of the
solution $u \in C((0,T),H^1)$ remains finite as $t \nearrow T$ thanks to the energy conservation $E(u)$
in time $t$ up to the blowup time $T$.
This implies that the bound (\ref{H1-final}) remains valid in the limit $t \nearrow T$ and
allows us to say that {\em peakons of the CH equation are $H^1$ orbitally stable.}

Various extensions of the $H^1$ orbital stability of peakons have been made recently.
Orbital stability of peaked periodic waves in the CH equation was proven in \cite{Lenells2004b,Lenells2005d}.
Stability of peakons in another integrable equation called the
Degasperis--Procesi equation was established in \cite{LinLiu} by extending ideas of \cite{CS}.
Asymptotic stability of peakons in the class of $H^1$ functions $u$ with $y := (1-\partial_x^2) u$ being
a non-negative finite measure is proven in \cite{Molinet1}. Asymptotic stability of trains of peakons
and anti-peakons with $y$ being a sign-indefinite finite measure was constructed recently in \cite{Molinet2}.

Multi-peakon solutions were constructed by many analytical and numerical tools \cite{S1,Chertok,H1,Lund}.
The local characteristic curve $x = \xi(t)$ for the Camassa--Holm equation (\ref{CHconv}) is
defined by the equation
\begin{equation}
\label{characteristic-speed}
\frac{d \xi}{dt} = u(t,\xi(t)).
\end{equation}
Since the multipeakon solution is known in the closed form $u(t,x) = \sum_{j=1}^N m_j(t) \varphi(x-x_j(t))$,
where $\{ x_k(t), m_k(t) \}_{k=1}^N$ satisfies the finite-dimensional Hamiltonian system
$$
\frac{dx_k}{dt} = \sum_{j = 1}^N m_j \varphi(x_k - x_j), \quad \frac{d m_k}{dt} = -\sum_{j=1}^N m_j m_k \varphi'(x_k-x_j),
$$
it is clear that the single peakons move along the local characteristic curves \cite{Lund} so that
$$
\frac{dx_k}{dt} = u(t,x_k(t)), \quad k \in \{1,2,\dots,N\}.
$$
Global conservative solutions to the CH equation with multi-peakons were studied with the
inverse scattering transform method in \cite{Kostenko}, where the long-time behavior of solutions with peakons
was investigated (see also \cite{S1,Li}) and details of collisions between peakons and anti-peakons were given
(see also \cite{BC1,Molinet2}).

The purpose of this work is to address the question of stability of a single peakon
in the time evolution of the CH equation beyond the $H^1$ orbital stability result of Theorem \ref{theorem-CS}.
We start with the linearized stability analysis and study evolution of the linearized equation around the peakons
by using the method of characteristics. The same method is also useful to prove nonlinear instability of
piecewise $C^1$ perturbations to peakons. This instability develops in spite of the $H^1$ orbital stability
result of Theorem \ref{theorem-CS}.

The previous works avoid the question of linearized stability of peakons.
It was noticed in \cite{CM} that ``the nonlinearity plays a dominant role rather than being a higher-order correction",
so that ``the passage from the linear to the nonlinear theory is not an easy task, and may even be false".
The authors of \cite{LinLiu} added that the peakons are not differentiable in $x$, which makes
it difficult to analyze the spectrum of the linearized operator around the peakons.

By adding a perturbation $v(t,x-t)$ to the single peakon $\varphi(x-t)$ moving with the normalized speed $c = 1$
and dropping the quadratic term in $v$, we obtain the linearized equation for $v(t,x-t)$
from the CH equation in the convolution form (\ref{CHconv}):
\begin{equation}
\label{linCH}
v_t + (\varphi - 1) v_x + \varphi' v + \varphi' \ast \left( \varphi v + \frac{1}{2} \varphi' v_x \right) = 0, \quad x - t \neq 0.
\end{equation}
In what follows, we use $x$ instead of $x-t$ thanks to the translational invariance
of the convolution operator. After the change, the location of the peak of $\varphi$ is placed at $x = 0$.
The following theorem represents the first main result of this work.

\begin{theorem}
\label{theorem-linear}
For every initial data $v_0 \in H^1$ satisfying $v_0(0) = 0$,
there exists a unique global solution $v \in C(\mathbb{R}^+,H^1)$ to the linearized equation
(\ref{linCH}) satisfying $v(t,0) = 0$,
\begin{equation}
\label{bound-1}
\| v(t,\cdot) \|_{H^1(0,\infty)}^2 = \| v_0 \|_{H^1(0,\infty)}^2 + 2 (e^t - 1) \int_0^{\infty} \varphi(s) \left( [v_0(s)]^2 + \frac{1}{2} [v'_0(s)]^2 \right) ds
\end{equation}
and
\begin{equation}
\label{bound-2}
\| v(t,\cdot) \|_{H^1(-\infty,0)}^2 = \| v_0 \|_{H^1(-\infty,0)}^2 +
2 (e^{-t} - 1) \int_{-\infty}^0 \varphi(s) \left( [v_0(s)]^2 + \frac{1}{2} [v'_0(s)]^2 \right) ds
\end{equation}
for every $t > 0$.
\end{theorem}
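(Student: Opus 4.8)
The plan is to carry out everything by the method of characteristics, reducing the nonlocal equation (\ref{linCH}) to transport along the flow $X(t,x_0)$ defined by $\dot X=\varphi(X)-1$ and then tracking two scalar quantities on each half-line. Since the speed $\varphi-1=e^{-|x|}-1$ vanishes only at $x=0$ and is strictly negative elsewhere, the point $x=0$ is a stationary characteristic, so the flow maps each of $(0,\infty)$ and $(-\infty,0)$ into itself and the two half-lines never interact through transport. I would first observe that the boundary condition $v(t,0)=0$ is \emph{forced}: since $v\in H^1$ is continuous and $P:=\varphi\ast(\varphi v+\tfrac12\varphi' v_x)$ lies in $C^1$, the two one-sided material derivatives of $v$ at $x=0$, equal to $v(t,0)-P_x(t,0)$ and $-v(t,0)-P_x(t,0)$ (using $\varphi'(0^{\pm})=\mp1$ in $\dot v=-\varphi'v-P_x$), must coincide, which gives $v(t,0)=0$ for all $t$ once $v_0(0)=0$. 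Local existence then follows from a Picard iteration on the characteristic (Duhamel) form of (\ref{linCH}), in which $P,P_x$ act as bounded linear operators on $H^1$; globality is a consequence of the a priori bounds (\ref{bound-1})--(\ref{bound-2}) themselves, which keep $\|v(t,\cdot)\|_{H^1}$ finite on every bounded time interval, and uniqueness is immediate from linearity.

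The heart of the matter is to differentiate, on each half-line, the energies $A_+(t)=\|v(t,\cdot)\|_{H^1(0,\infty)}^2$ and $B_+(t)=\int_0^\infty\varphi\,(v^2+\tfrac12 v_x^2)\,dx$ (and analogously $A_-,B_-$ on $(-\infty,0)$), and to show they satisfy the closed linear system $A_\pm'=\pm2B_\pm$ and $B_\pm'=\pm B_\pm$. Integrating this, with $B_\pm(t)=e^{\pm t}B_\pm(0)$, yields exactly (\ref{bound-1})--(\ref{bound-2}). To compute the derivatives I would pass to Lagrangian variables: writing $V,W$ for $v,v_x$ along the flow and $\xi=\partial X/\partial x_0$ (so $\dot\xi=\varphi'(X)\xi$), the material derivatives are $\dot V=-\varphi'V-P_x$ and $\dot W=-2\varphi'W-\varphi''V-P_{xx}$, and with $\varphi''=\varphi$, $P_{xx}=P-2g$ away from the peak and $\varphi'=\mp\varphi$ on $(0,\infty)$/$(-\infty,0)$ these let me write $\tfrac{d}{dt}$ of the integrands explicitly and integrate back over $x_0$.

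For $A_\pm$ the nonlocal contributions collapse harmlessly: after using $v(t,0)=0$ and decay at infinity, the terms involving $P$ combine into $\int(vP)_x\,dx$, a vanishing boundary term, and $\int 2\varphi v v_x\,dx=\int\varphi v^2\,dx$ by parts, leaving precisely $A_\pm'=\pm2B_\pm$; this step is routine. The main obstacle is the companion identity for $B_\pm$, where the same manipulations leave a genuinely nonlocal remainder
$N_+=\int_0^\infty(\varphi^2 v v_x-2\varphi v P_x-\varphi v_x P)\,dx$
that must be shown to vanish. Here I would exploit the explicit kernel: for $x>0$ one has $P+P_x=2e^{x}\int_x^\infty e^{-y}g(y)\,dy$, so that $N_+=0$ reduces to $\int_0^\infty e^{-2x}v^2\,dx=2\int_0^\infty v(x)\int_x^\infty e^{-y}g(y)\,dy\,dx$. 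The crucial point is that only the combination $P+P_x$ survives, and it depends on $g$ --- hence on $v$ --- only over $(x,\infty)$, so the coupling to the opposite half-line drops out entirely; this is precisely what makes the final formulas decouple into a pure $e^{t}$ growth on the right and $e^{-t}$ decay on the left. Introducing the antiderivative $V_1(x)=\int_0^x v$ with $V_1(0)=0$, substituting $e^{-y}g=e^{-2y}(v-\tfrac12 v')$ on the right half-line, interchanging the order of integration, and integrating by parts twice then collapses the right-hand side back to $\int_0^\infty e^{-2x}v^2\,dx$, proving $N_+=0$; the left half-line is identical with reversed signs. I expect this cancellation --- verifying that the cross-half-line convolution contributions disappear and that the weighted identity closes --- to be the most delicate step, the remainder being bookkeeping and the concluding ODE integration.
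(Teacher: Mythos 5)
Your proposal is correct, but it follows a genuinely different route from the paper's. The paper never writes an energy ODE: after simplifying the convolution via Lemma~\ref{lem-0}, it solves the linearized equation \emph{in closed form} along the characteristics (\ref{char-sol}) --- first for the antiderivative $w$, then for $v$ --- obtaining the explicit formulas (\ref{v-sol}), (\ref{u-sol}), (\ref{u-sol-neg}); the identities (\ref{bound-1})--(\ref{bound-2}) then come from substituting these formulas, changing variables $x = X(t,s)$, and integrating by parts, which yields (\ref{L2-norm-p})--(\ref{der-L2-norm-p-neg}) and telescopes upon summation. You instead bypass the exact solution entirely and close the linear system $A_\pm' = \pm 2B_\pm$, $B_\pm' = \pm B_\pm$ for the Eulerian energies. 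I checked your two key cancellations and they are genuine: with $v(t,0)=0$ and $\varphi' = \mp\varphi$ on $\mathbb{R}^\pm$, the nonlocal terms in $A_\pm'$ collapse to the boundary term $[vP]$, and your remainder $N_+$ does vanish (your reduction via $P + P_x = 2e^x\int_x^\infty e^{-y} g\, dy$, Fubini, and two integrations by parts closes; equivalently, in the paper's local variables the residue is $2\int_0^\infty \varphi^2 w(w-v)\, dx$, which vanishes by parts since $(\varphi^2)' = -2\varphi^2$ on $\mathbb{R}^+$). Your route explains structurally why the rates are exactly $e^{\pm t}$ --- the weighted energies $B_\pm$ diagonalize the evolution --- and why the half-lines decouple; the paper's route buys more in one stroke, since the explicit solution simultaneously gives existence, uniqueness, preservation of $v(t,0)=0$, and the pointwise bounds of Lemma~\ref{lem-3}, and its computation is valid directly for every $v_0 \in H^1$ with $v_0(0)=0$.

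Two points need shoring up before your argument is complete at the stated generality. First, your differentiation of $A_\pm$ and $B_\pm$ manipulates $v_{xx}$ and $v_{xt}$, which do not exist for $v_0$ merely in $H^1$; you must either run the computation on a dense class of smooth data vanishing at the origin and pass to the limit using continuity of the linear solution map in $H^1$ on bounded time intervals, or perform it on an explicit representation of the solution, as the paper does. Second, the well-posedness step is only sketched: the Picard iteration does work because the flow of $\dot X = \varphi(X) - 1$ is explicit, independent of $v$, and bi-Lipschitz on each half-line, and because $v \mapsto P[v], P_x[v]$ are bounded on $H^1$ (indeed smoothing, $H^1 \to H^2$), but these facts should be stated, global continuation must be tied to your a priori identities, and uniqueness, while easy, requires a Gronwall estimate on the difference of two solutions rather than ``linearity'' alone.
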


In Theorem \ref{theorem-linear}, we confirm the expectation from \cite{CS} that
the passage from the linear to the nonlinear theory may be false in $H^1$.
Indeed, the sharp exponential growth of $\| v(t,\cdot) \|_{H^1(0,\infty)}$
in (\ref{bound-1}) for the solution of the linearized equation (\ref{linCH})
contradicts the bound (\ref{H1-final}) in Theorem \ref{theorem-CS}
obtained for the solution of the full nonlinear equation (\ref{CHconv}).

On the other hand, by solving the linearized equation (\ref{linCH}) with a
method of characteristics, we discover the intrinsic instability associated
with the peaked profile of the traveling wave $\varphi$ (Lemma \ref{lem-2}). This instability is related
to the characteristics to the right of the peak for $x > 0$ but not to the left of the peak for $x < 0$.

Within the linearized equation (\ref{linCH}), we also discover that if $v_0(0) \neq 0$,
the continuous initial data $v_0 \in H^1$ generates a finite jump discontinuity in the solution $v(t,x)$
at the peak $x = 0$ for every small $t > 0$ (Lemma \ref{lem-1}).
This finite jump discontinuity is allowed in the domain of the linearized operator
associated with the linearized equation (\ref{linCH}) in $L^2$ (Remark \ref{rem-1}),
however it prevents the solution $v(t,\cdot)$ to stay in $H^1$ for every $t > 0$.
Related to this fact, we prove that the single peak of a perturbed peakon in the CH equation
in the convolution form (\ref{CHconv}) moves with the speed equal to the local characteristic speed as in (\ref{characteristic-speed})
(Lemma \ref{lem-nonlinear-2}). This allows us to show that the constraint on the solution $v$ at the peak required in Theorem \ref{theorem-linear}
is satisfied identically in the time evolution of the full nonlinear equation (\ref{CHconv}) (Remark \ref{rem-3}).

Similarly, we show for the linearized equation (\ref{linCH}) that if $v_0 \in H^1 \cap C^1$
with $v_0(0) = 0$ and $v_0'(0) \neq 0$, then $v(t,\cdot) \notin C^1$ for every small $t > 0$
because of the finite jump discontinuity of $v_x(t,x)$ at $x = 0$ for $t > 0$ (Remark \ref{rem-2}).
Compared to the constraint on $v$ at the peak, there is no way to
maintain a constraint on $v_x$ at the peak and to prevent the finite jump discontinuity of $v_x(t,x)$ at $x = 0$
in the time evolution of the full nonlinear equation (\ref{CHconv}).

The above facts suggest that the $L^2$-based spaces like $H^1$ or $H^2$ may not be the best spaces
to study the intrinsic instability of peakons in the CH equation. Instead,
we work in the space of bounded continuous functions on $\mathbb{R}$ which are piecewise
continuous differentiable with a finite jump discontinuity of the first derivative at $x = 0$ and
bounded first derivatives away from $x = 0$. This space is denoted by $C^1_0$:
\begin{equation}
\label{C-1-0}
C^1_0 := \{ v \in C^0(\mathbb{R}) \cap C^1(\mathbb{R}^-) \cap C^1(\mathbb{R}^+); \quad v, \partial_x v \in L^{\infty}(\mathbb{R}) \}.
\end{equation}
The linear instability result of Theorem \ref{theorem-linear} is easily extended to
prove exponential instability of $\| v_x(t,\cdot) \|_{L^{\infty}}$ because
of the characteristics to the right of the peak (Lemma \ref{lem-3}).
Unlike the linear instability in $H^1$ of Theorem \ref{theorem-linear}, which cannot be true in the nonlinear
evolution due to the result of Theorem \ref{theorem-CS}, the linear instability in $C^1_0$
persists in the full nonlinear equation (\ref{CHconv}) as the nonlinear instability of peakons
with respect to piecewise $C^1$ perturbations (Lemma \ref{lem-nonlinear-5}). The following theorem
represents the second main result of this work.

\begin{theorem}
\label{theorem-nonlinear}
For every $\delta > 0$, there exist $t_0 > 0$ and $u_0 \in H^1 \cap C^1_0$
satisfying
\begin{equation}
\label{initial-bound-theorem}
\| u_0 - \varphi \|_{H^1} + \| u_0' - \varphi' \|_{L^{\infty}} < \delta,
\end{equation}
such that the global conservative solution to the CH equation (\ref{CHconv}) with the initial data $u_0$ satisfies
\begin{equation}
\label{final-bound-theorem}
\| u_x(t_0,\cdot) - \varphi'(\cdot - \xi(t_0)) \|_{L^{\infty}} > 1,
\end{equation}
where $\xi(t)$ is a point of peak of $u(t,\cdot + \xi(t)) \in H^1 \cap C^1_0$ for $t \in [0,t_0]$.
\end{theorem}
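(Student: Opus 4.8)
The plan is to reduce the dynamics of the slope $w := u_x$ to a scalar Riccati equation along characteristics and to exhibit an unstable equilibrium sitting exactly at the peakon slope on the right of the peak, while using the $H^1$ orbital stability of Theorem~\ref{theorem-CS} to freeze the amplitude of the solution near that of $\varphi$. First I would differentiate (\ref{CHconv}) in $x$ and use the convolution identity $\partial_x\bigl(\varphi' \ast g\bigr) = \varphi \ast g - 2g$ with $g := u^2 + \tfrac12 u_x^2$ to eliminate $u_{xx}$ from the nonlocal term, obtaining the closed relation $u_{tx} + u u_{xx} + \tfrac12 u_x^2 - u^2 + P = 0$ with $P := \tfrac12 \varphi \ast g$. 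Transporting this along the characteristic $\dot x = u(t,x)$ of (\ref{characteristic-speed}) gives the Riccati equation
\[ \frac{dw}{dt} = u^2 - \tfrac12 w^2 - P, \]
which is the engine of the whole argument.

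Next I would read off the instability from the phase line of this Riccati equation evaluated along the characteristic through the peak. Since the single peak is transported by a characteristic (Lemma~\ref{lem-nonlinear-2}), in the comoving frame the one-sided slopes $w_\pm(t) := u_x\bigl(t,\xi(t)^\pm\bigr)$ obey the same equation with $u^2$ and $P$ evaluated at the peak, where for the exact peakon $u = 1$, $P = \tfrac12$, $w_- = +1$, $w_+ = -1$. Thus $w$ solves $\dot w = \tfrac12(1 - w^2)$ at the peak, for which $w = +1$ is asymptotically stable but $w = -1$ is \emph{unstable}; this is exactly the left/right asymmetry already visible in the linear bounds (\ref{bound-1})--(\ref{bound-2}). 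Consequently any perturbation that places $w_+(0)$ off the value $-1$ is expelled from it at the linear rate $e^{t}$, either growing monotonically toward $+1$ (so that $w_+$ crosses $0$) or escaping downward toward $-\infty$ (wave breaking at the peak); in both cases $u_x - \varphi' \to w_+ - (-1)$ leaves the ball of radius $1$ at the peak.

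I would then build the initial data and close the estimate by a comparison argument. Given $\delta > 0$ I would choose $u_0 = \varphi + v_0$ with $v_0 \in H^1 \cap C^1_0$ supported in a thin interval of width $h$ to the right of the peak, with $v_0(0) = 0$ and $v_0'(0^+) = \pm q_0$ seeding the unstable mode, and with $\|v_0'\|_{L^\infty} \approx q_0 < \delta$. The essential point is that Theorem~\ref{theorem-CS} only controls the $H^1$-distance to the peakon orbit by the quartic root of the initial distance; to make the coefficients $u^2$ and $P$ at the peak deviate from $1$ and $\tfrac12$ by less than $q_0$ at all later times, I must take $\|v_0\|_{H^1} \ll q_0^4$, which is possible for a sufficiently thin spike since then $\|v_0\|_{H^1}\sim q_0\sqrt h \ll q_0^4$. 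With the coefficient fluctuation $a(t) := (u^2 - P)|_{\mathrm{peak}} - \tfrac12$ thus forced to satisfy $\|a\|_{L^\infty} \ll q_0$, the linearization of the Riccati equation near $w = -1$ reads $\dot q = q + a(t) + O(q^2)$ for $q := w_+ + 1$, whose solution obeys $q(t) \ge \tfrac12 q_0 e^{t}$; hence $|q|$ exceeds $1$ at a finite time $t_0 \sim \ln(1/q_0)$. Evaluating at the peak then yields $\|u_x(t_0,\cdot) - \varphi'(\cdot - \xi(t_0))\|_{L^\infty} \ge |w_+(t_0) + 1| > 1$, which is (\ref{final-bound-theorem}), while (\ref{initial-bound-theorem}) holds by construction.

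The hard part, and where the technical weight of the proof lies, is the simultaneous control of three competing requirements over the logarithmically long interval $[0,t_0]$. First, one must genuinely beat the quartic loss in Theorem~\ref{theorem-CS}: the perturbation has to be small in $H^1$ to the fourth power of its own slope, so that the $O(\delta^{1/4})$ amplitude fluctuation predicted by orbital stability does not swamp the $O(q_0)$ seed that drives the instability. Second, one must verify that $u(t,\cdot)$ remains in $H^1 \cap C^1_0$ with a well-defined near-peak maximum $\xi(t)$ throughout $[0,t_0]$; this forces the thin perturbation to be designed so that the only characteristic allowed to steepen is the one carrying the controlled mode at the peak, while the returning flank of the spike stays in the non-breaking basin $w > -\sqrt{2(u^2-P)}$, so that no slope becomes unbounded away from $x = \xi(t)$ before $t_0$. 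Third, one must justify rigorously, in the weak $C^1_0$ class rather than merely formally, that the peak is transported along a characteristic and that the one-sided limit $w_+(t)$ solves the Riccati equation with peak-evaluated coefficients. The conceptual heart is the tension the theorem advertises: it is precisely the $H^1$ stability that keeps $u$ and $P$ near their peakon values, and it is exactly this which guarantees that the Riccati equation retains its unstable equilibrium at $w = -1$ and forces the $L^\infty$ slope to escape.
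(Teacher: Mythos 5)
Your proposal is correct in substance and follows the same strategy as the paper: the peak is transported by a characteristic (Lemma \ref{lem-nonlinear-2}), the one-sided slope at the peak obeys a Riccati equation along that characteristic (the paper's (\ref{char-U})--(\ref{U-0}), rigorously justified in Lemmas \ref{lem-nonlinear-3} and \ref{lem-nonlinear-4}), Theorem \ref{theorem-CS} freezes the amplitude, and the breaking/no-breaking dichotomy closes the argument exactly as in Lemma \ref{lem-nonlinear-5}. The genuine difference is in how the $O(\varepsilon)$ amplitude forcing is neutralized. You track $q = w_+ + 1$ alone, so the fluctuation $a(t)$, which contains the term $2V_0(t) = 2v(t,0)$ of size $O(\varepsilon)$, must be beaten by the slope seed; this forces the scale separation $\varepsilon \ll q_0$ and hence the thin-spike datum with $\| v_0 \|_{H^1} \sim q_0 \sqrt{h} \ll q_0^4$. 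The paper avoids this entirely by tracking the \emph{combination} $V_0 + U_0^+$: in (\ref{V-U-0}) the $V_0$-forcing is absorbed into the linear term, and the remaining forcing $F_0 = -Q[v](0) - P[v](0)$ is \emph{sign-definite} ($F_0 \leq 0$), as is $-\tfrac{1}{2}(U_0^+)^2$, so a one-sided differential inequality (\ref{diff-ineq}) works with no linearization and no control of the quadratic error; this permits the opposite scaling (slope seed $2\varepsilon^2 \ll \varepsilon$) and needs no special support properties of $v_0$. Both closings are valid; the paper's is lighter because the sign structure does the work your comparison argument has to do by hand (your claimed rate $\tfrac12 q_0 e^t$ degrades once $q$ is of order one, though exponential escape survives).

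Two smaller points. First, your second ``hard part'' --- engineering the datum so that no slope blows up away from the peak before $t_0$ --- is unnecessary: by the blow-up criterion built into Lemma \ref{lem-nonlinear-3}, if the $C^1_0$ solution breaks anywhere at some $T$ before the nominal time, then $\| v_x(t,\cdot) \|_{L^{\infty}} \to \infty$ as $t \nearrow T$, and the bound (\ref{final-bound-theorem}) is already attained at some $t_0 < T$; the paper simply splits into the cases $T < \tau$ and $T > \tau$. Second, of your two seeds $v_0'(0^+) = \pm q_0$, prefer the negative (steepening) one, as the paper does with $\alpha = -2\varepsilon^2$: with the positive seed, $w_+$ may cross $0$ before $|q|$ exceeds $1$, at which moment the kink $\xi(t)$ ceases to be the maximum of $u(t,\cdot)$, and the identification of your comoving frame with the max point $\xi(t)$ appearing in Theorem \ref{theorem-CS} --- which is what bounds $|V_0(t)|$ by $\varepsilon$ --- becomes problematic. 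With the negative seed the peak stays a genuine maximum throughout, and the quadratic term only accelerates the escape.
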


We note that the peakon $\varphi$ is located on the boundary between global and breaking solutions
to the Camassa--Holm equation in the sense that $(1 - \partial_x^2) \varphi = 2 \delta$
is zero everywhere except at the peak. It is expected that some perturbations to the peakons
lead to global solutions whereas some others break in a finite time in the same sense
as (\ref{blow-up-criterion}). The nonlinear instability result in Theorem \ref{theorem-nonlinear}
does not distinguish between these two possible scenarios because the bound (\ref{final-bound-theorem})
is attained before the blowup time $T$. Note that we measure
the instability of peakons in the same norm as the one used to study wave breaking
in the Camassa--Holm equation in \cite{C1}. Although perturbations
to the peakons are not smooth in $H^1 \cap C^1_0$, we have justified the same
blow-up criterion in the method of characteristics for the solutions in $H^1 \cap C^1_0$
as the criterion (\ref{blow-up-criterion}) for the smooth solutions (Lemma \ref{lem-nonlinear-3}).

A general global conservative solution to the CH equation (\ref{CHconv}) satisfies
(\ref{conservative-solution}) and may have finite jumps of the energy $E(u)$ in (\ref{conserved-quantities})
at some time instances, which correspond to collision between peakons and anti-peakons \cite{BC1}.
The initial data $u_0 \in H^1 \cap C^1_0$ in Theorem \ref{theorem-nonlinear} exclude anti-peakons
and hence no jumps of the energy $E(u)$ occur in the time evolution
of the global conservative solution, which hence satisfies $u \in C(\mathbb{R}^+,H^1)$.
Since $u_0 \in H^1 \cap W^{1,\infty}$, this unique global conservative solution coincides
for $t \in (0,T)$ with the local solution (\ref{LWP-peakons}) constructed in \cite{Linares} where the maximal existence
time $T > 0$ may be finite because of the blow-up of the $W^{1,\infty}$ norm of the local solution.
Again, this blow-up of the $W^{1,\infty}$ norm agrees well with the instability criterion (\ref{final-bound-theorem}) in Theorem \ref{theorem-nonlinear}.

Study of the instability of peakons in the CH equation is inspired by
the recent work \cite{EP,GP1,GP2} on smooth and peaked periodic waves in
the reduced Ostrovsky equation,
\begin{equation}
\label{redOst}
u_t + u u_x = \partial_x^{-1} u,
\end{equation}
which is another generalization of the inviscid Burgers equation $u_t + u u_x = 0$.
While smooth periodic waves are linearly and nonlinearly stable \cite{EP,GP1},
peaked periodic waves were found to be linearly unstable because
the $L^2$-norm of the perturbation grows exponentially in time \cite{GP2}.
The linear instability was found from the solution of the truncated linearized equation
obtained by the method of characteristics and from the estimates on the solution of the full
linearized equation. Nonlinear instability was not studied in \cite{GP2}
due to the lack of global well-posedness results on solutions of the reduced Ostrovsky equation (\ref{redOst}) in $H^1$.

Compared to \cite{GP2}, we show here that the full linearized equation (\ref{linCH}) can be solved by method of characteristics
without truncation and that the nonlinear instability of peakons in the CH equation (\ref{CHconv}) can be concluded from
the linear instability of perturbations in $H^1 \cap C^1_0$.

The remainder of the article is organized as follows. Linearized evolution near a single peakon
is studied in Section 2, where the proof of Theorem \ref{theorem-linear} is given. Nonlinear evolution of
piecewise $C^1$ perturbations to a single peakon is studied in Section 3, where
the proof of Theorem \ref{theorem-nonlinear} is given. Section 4 concludes the article
with ideas for further work.

\vspace{5cm}

\section{Linearized evolution near a single peakon}

Let us first simplify the linearized equation (\ref{linCH}) by using the following elementary result.

\begin{lemma}
\label{lem-0}
Assume that $v \in H^1$. Then, for every $x \in \mathbb{R}$,
\begin{equation}
\label{tech-comp}
\varphi' \ast \left( \varphi v + \frac{1}{2} \varphi' v_x \right)(x) =
- \varphi'(x) v(x) + \varphi'(x)  v(0) - \varphi(x) \int_0^x v(y) dy.
\end{equation}
\end{lemma}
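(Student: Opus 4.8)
The plan is to prove the identity by a direct, if careful, computation of the convolution, using the explicit forms $\varphi(z)=e^{-|z|}$, $\varphi'(z)=-\operatorname{sgn}(z)\,e^{-|z|}$ and the fact that $\varphi''(z)=\varphi(z)$ away from $z=0$ (the distributional relation $\varphi''=\varphi-2\delta$ being responsible for the boundary terms that appear below). I would write the left-hand side as $I(x)=I_1(x)+I_2(x)$ with $I_1(x)=\int_{\mathbb R}\varphi'(x-y)\varphi(y)v(y)\,dy$ and $I_2(x)=\tfrac12\int_{\mathbb R}\varphi'(x-y)\varphi'(y)v'(y)\,dy$, and note first that $v\in H^1(\mathbb R)\hookrightarrow C^0(\mathbb R)\cap L^\infty(\mathbb R)$ with $v(y)\to 0$ as $|y|\to\infty$, so the point values $v(0),v(x)$ are well defined and all integrals converge absolutely (the kernels decay exponentially in $y$).

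The essential step is to integrate by parts in $I_2$ so as to move the derivative off $v$. Fixing $x\neq 0$, I would split $\int_{\mathbb R}$ into the subintervals determined by the two breakpoints $y=0$ and $y=x$ of the kernel $K(x,y):=\varphi'(x-y)\varphi'(y)$, on each of which $K(x,\cdot)$ is smooth. Integration by parts of an $H^1$ function on each subinterval is legitimate and produces (i) the interior boundary contributions, which combine into $-[\![K]\!]_{y=0}\,v(0)-[\![K]\!]_{y=x}\,v(x)$, where $[\![K]\!]$ denotes the jump (right minus left value), the endpoint terms at $\pm\infty$ vanishing by decay; and (ii) the bulk term $-\tfrac12\int_{\mathbb R}\partial_y K(x,y)\,v(y)\,dy$. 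Using $\varphi''=\varphi$ on each open subinterval gives $\partial_y K=-\varphi(x-y)\varphi'(y)+\varphi'(x-y)\varphi(y)$, so the bulk term equals $\tfrac12\bigl(J-I_1\bigr)$ with $J:=\int_{\mathbb R}\varphi(x-y)\varphi'(y)v(y)\,dy$. Hence $I=\tfrac12(I_1+J)-\tfrac12\bigl([\![K]\!]_{y=0}v(0)+[\![K]\!]_{y=x}v(x)\bigr)$, and the point is that only the symmetric combination $I_1+J$ survives.

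A region-by-region evaluation then finishes the computation. For $x>0$ the integrand of $I_1+J$ vanishes identically on $(-\infty,0)$ and on $(x,\infty)$ because the relevant signs cancel, while on $(0,x)$ it reduces to $-2e^{-x}v(y)$, giving $\tfrac12(I_1+J)=-\varphi(x)\int_0^x v(y)\,dy$. For the jumps one checks $[\![K]\!]_{y=0}=2\varphi(x)$ and $[\![K]\!]_{y=x}=-2\varphi(x)$, so the boundary terms contribute $\varphi(x)v(x)-\varphi(x)v(0)$, which equals $-\varphi'(x)v(x)+\varphi'(x)v(0)$ since $\varphi'(x)=-\varphi(x)$ for $x>0$. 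Adding the two pieces yields exactly the right-hand side of (\ref{tech-comp}). The case $x<0$ is identical after relabelling the breakpoints (now $\varphi'(x)=+\varphi(x)$, and the nonvanishing region is $(x,0)$), and $x=0$ follows because both sides of (\ref{tech-comp}) are continuous in $x$.

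I expect the main obstacle to be purely bookkeeping: keeping the two sign functions $\operatorname{sgn}(x-y)$ and $\operatorname{sgn}(y)$ straight across the three regions and recovering the two jumps $[\![K]\!]_{y=0}$ and $[\![K]\!]_{y=x}$ with the correct signs, together with the conceptual care that the $-2\delta$ in $\varphi''=\varphi-2\delta$ must not be inserted into the bulk integral but is already accounted for by the boundary terms at $y=0$ and $y=x$. The cancellation leaving only $I_1+J$ is what makes the final expression collapse to the three stated terms.
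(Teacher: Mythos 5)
Your proof is correct and follows essentially the same route as the paper: integrating by parts to move the derivative off $v_x$, with the jump (delta) contributions at $y=0$ and $y=x$ producing the terms $-\varphi'(x)v(x)+\varphi'(x)v(0)$ and the symmetric combination $\tfrac12\bigl((\varphi\ast\varphi' v)+(\varphi'\ast\varphi v)\bigr)$ collapsing to $-\varphi(x)\int_0^x v(y)\,dy$. Your piecewise integration by parts with explicit jump bookkeeping is simply the rigorous unpacking of the paper's distributional identity $\varphi''=\varphi-2\delta$, and your region-by-region sign cancellation is the paper's ${\rm sign}(y)+{\rm sign}(x-y)$ identity in different clothing.
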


\begin{proof}
Since integrals of absolutely integrable functions are continuous and since
$\varphi, \varphi', v, v_x \in L^2$, the map
$$
x \mapsto \varphi' \ast \left( \varphi v + \frac{1}{2} \varphi' v_x \right)
$$
is continuous for every $x \in \mathbb{R}$. Now, $H^1(\mathbb{R})$ is continuously
embedded into the space of bounded continuous functions on $\mathbb{R}$
decaying to zero at infinity and thus, $v \in C(\mathbb{R}) \cap L^{\infty}(\mathbb{R})$.
Integrating by parts yields the following explicit expression for every $x \in \mathbb{R}$:
\begin{eqnarray*}
\left( \varphi' \ast \varphi' v_x\right)(x)
& = & \left( \varphi'' \ast \varphi' v \right)(x) - \left( \varphi' \ast \varphi'' v \right)(x) \\
& = & \left( \varphi \ast \varphi' v \right)(x) - 2 \varphi'(x) v(x) - \left( \varphi' \ast \varphi v \right)(x) + 2\varphi'(x)  v(0),
\end{eqnarray*}
which simplifies the left-hand side of (\ref{tech-comp}) to the form:
$$
\varphi' \ast \left( \varphi v + \frac{1}{2} \varphi' v_x \right)(x) =
- \varphi'(x) v(x) + \varphi'(x)  v(0) +\frac{1}{2}
\left( \varphi \ast \varphi' v\right)(x) +
\frac{1}{2} \left( \varphi' \ast \varphi v\right)(x).
$$
Furthermore, we obtain
\begin{eqnarray*}
\frac{1}{2} \left( \varphi \ast \varphi' v\right)(x) + \frac{1}{2} \left( \varphi' \ast \varphi v\right)(x) & = &
-\frac{1}{2} \int_{\mathbb{R}} \varphi(x-y) \varphi(y) v(y) \left[ {\rm sign}(y) + {\rm sign}(x-y) \right] dy \\
& = & - \int_0^x \varphi(x-y) \varphi(y) v(y) dy \\
& = & - \varphi(x) \int_0^x v(y) dy,
\end{eqnarray*}
which completes the proof of (\ref{tech-comp}).
\end{proof}

The Cauchy problem for the linear equation (\ref{linCH}) can be written in the evolution form:
\begin{equation}
\label{linCH-equiv}
\left\{ \begin{array}{l}
\frac{dv}{dt} = Av, \quad t > 0, \\
v |_{t = 0} = v_0, \end{array} \right.
\end{equation}
where the linearization near the single peakon $\varphi$ for a perturbation $v$ in $H^1$ is given by
\begin{equation}
\label{lin-op}
(Av)(x) = \left[ 1 - \varphi(x) \right] v'(x) + \varphi(x) \int_0^x v(y) dy - v(0) \varphi'(x), \quad x \neq 0,
\end{equation}
thanks to Lemma \ref{lem-0} and the translational invariance.

In order to define a strong solution to the Cauchy problem (\ref{linCH-equiv}), we consider
the operator $A : {\rm dom}(A) \subset L^2(\mathbb{R}) \mapsto L^2(\mathbb{R})$ with the maximal domain
given by
\begin{equation}
\label{dom-A}
{\rm dom}(A) = \{ v \in L^2(\mathbb{R}) : \quad Av \in L^2(\mathbb{R}) \}.
\end{equation}
Since $\varphi(x) \to 0$ as $|x| \to \infty$ exponentially fast and $\varphi' \in L^2(\mathbb{R})$,
${\rm dom}(A)$ is equivalent to
\begin{equation*}
{\rm dom}(A) \equiv \{ v \in L^2(\mathbb{R}) : \quad (1-\varphi) v' \in L^2(\mathbb{R}), \quad |v(0)| < \infty \}.
\end{equation*}

\begin{remark}
\label{rem-1}
Since $\varphi(0) = 1$, $H^1$ is continuously embedded into ${\rm dom}(A)$ but it is not equivalent
to ${\rm dom}(A)$. In particular, if $v \in H^1(\mathbb{R}^+) \cap H^1(\mathbb{R}^-)$
with a finite jump discontinuity at $x = 0$
then $v \in {\rm dom}(A)$ but $v \notin H^1$.
However, the representation (\ref{tech-comp}) in Lemma \ref{lem-0}
does not hold for solutions in ${\rm dom}(A)$ with a finite jump discontinuity at $x = 0$.
\end{remark}

Let us consider the linearized Cauchy problem (\ref{linCH-equiv})
in the space $C^1_0$ defined by (\ref{C-1-0}). The following lemma shows
that unless $v_0 \in C^1_0$ satisfies the constraint $v_0(0) = 0$,
solutions to the Cauchy problem (\ref{linCH-equiv}) do not remain in $C^1_0$ for $t \neq 0$.

\begin{lemma}
\label{lem-1}
Assume that $v_0 \in {\rm dom}(A) \cap C^1_0$ with $v_0(0) \neq 0$ and that there exists
a strong solution $v \in C(\mathbb{R}^+,{\rm dom}(A))$ to the Cauchy problem (\ref{linCH-equiv}).
Then, $v(t,\cdot) \notin C^1_0$ for every $t \in (0,t_0)$ with $t_0 > 0$ sufficiently small.
\end{lemma}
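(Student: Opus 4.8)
The plan is to exploit two structural features of the operator $A$ in (\ref{lin-op}) at the peak. First, the transport coefficient $1-\varphi(x)$ vanishes at $x=0$ because $\varphi(0)=1$, so the characteristics $\dot x = \varphi(x)-1$ have $x=0$ as a stationary point and never cross it. Second, the forcing term $-v(0)\varphi'(x)$ is genuinely discontinuous at the peak, since $\varphi'(0^+)=-1$ while $\varphi'(0^-)=+1$. Hence the forcing felt just to the right of the peak is $+v(0)$ and just to the left is $-v(0)$, and it is this sign asymmetry, driven by $v(0)\neq 0$, that I expect to tear a continuous profile apart at $x=0$.

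First I would record that $V(t):=v(t,0)$ is continuous in $t$ with $V(0)=v_0(0)\neq 0$. This follows because point evaluation at the peak is a bounded functional on $\mathrm{dom}(A)$: from $Av\in L^2$ together with $(1-\varphi)v'\in L^2$ and $\varphi\int_0^{\cdot}v\,dy\in L^2$ one recovers $v(0)\varphi'$, and since $\|\varphi'\|_{L^2}\neq 0$ this controls $|v(0)|$ by the graph norm. As $v\in C(\mathbb{R}^+,\mathrm{dom}(A))$, the map $t\mapsto V(t)$ is continuous, so after shrinking the interval there is $t_0>0$ on which $V$ keeps the sign of $v_0(0)$, and in particular $V(t)\neq 0$ on $[0,t_0)$.

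Next, arguing by contradiction, suppose $v(t,\cdot)\in C^1_0$ throughout $[0,t_0)$. Then $v_x$ is bounded near the peak, so as $x\to 0^\pm$ the terms $(1-\varphi)v_x$ and $\varphi\int_0^x v\,dy$ tend to $0$, while $-v(0)\varphi'(x)\to \pm V(t)$; thus $(Av)(t,\cdot)$ admits the one-sided limits $\pm V(t)$ and stays bounded near $x=0$. Integrating the strong-solution identity $v(t,x)=v_0(x)+\int_0^t(Av)(s,x)\,ds$ and passing $x\to 0^\pm$ by dominated convergence, I obtain the one-sided traces
\begin{equation*}
v(t,0^+)=v_0(0)+\int_0^t V(s)\,ds,\qquad v(t,0^-)=v_0(0)-\int_0^t V(s)\,ds ,
\end{equation*}
so the jump equals $v(t,0^+)-v(t,0^-)=2\int_0^t V(s)\,ds$. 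But $v(t,\cdot)\in C^1_0\subset C^0(\mathbb{R})$ forces the two traces to coincide, i.e. $\int_0^t V(s)\,ds\equiv 0$ on $[0,t_0)$ and hence $V\equiv 0$, contradicting $V(0)\neq 0$. Since $V$ has constant sign on $[0,t_0)$, the jump $2\int_0^t V(s)\,ds$ is in fact strictly nonzero for every $t\in(0,t_0)$, so $v(t,\cdot)$ is discontinuous at the peak and therefore $v(t,\cdot)\notin C^1_0$.

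The main obstacle is analytic rather than structural. Taking the pointwise one-sided limits of $Av$ at the peak and interchanging them with the time integral is legitimate only while $v(t,\cdot)$ retains the $C^1_0$ regularity that makes $(1-\varphi)v_x$ vanish at $x=0$—a bare $L^2$ strong solution in $\mathrm{dom}(A)$ has no such traces—and one additionally needs a local-in-time uniform bound on the $C^1_0$ seminorm near the peak to dominate the integrand; for this reason the computation must be organised on a maximal interval of $C^1_0$-regularity as above. Moreover, by Remark \ref{rem-1} the representation (\ref{tech-comp}) underlying the form (\ref{lin-op}) of $A$ fails as soon as a jump is present, so promoting the conclusion from ``small $t$'' to ``every $t\in(0,t_0)$'' requires noting that the characteristics never cross $x=0$, so the half-lines decouple and the jump, once created, cannot heal—this is precisely why the statement is local in time and $t_0$ must be taken small.
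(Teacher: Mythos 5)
Your proposal is correct and follows essentially the same route as the paper's proof: arguing by contradiction, you compute the one-sided limits $\lim_{x\to 0^{\pm}}(Av)(t,x) = \pm v(t,0)$ (the transport term $(1-\varphi)v_x$ and the integral term vanish at the peak, leaving only the discontinuous forcing $-v(0)\varphi'$), and then integrate in time to show that $v(t,0)\neq 0$ creates a finite jump of $v(t,\cdot)$ at $x=0$, contradicting continuity. The paper's proof is exactly this argument in compressed form --- it states the jump of $\frac{dv}{dt}$ at $x=0$ and immediately concludes $v(t,\cdot)\notin C(\mathbb{R})$ for small $t>0$ --- so your added steps (continuity of $t\mapsto v(t,0)$ via the graph norm of ${\rm dom}(A)$, and the dominated-convergence justification of the time integration) are refinements of details the paper leaves implicit rather than a different method.
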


\begin{proof}
Assume that $v(t,\cdot) \in C^1_0$ for $t \in [0,t_0]$ with some $t_0 > 0$ and obtain a contradiction.
If $v(t,\cdot) \in C^1_0$, then it follows from (\ref{linCH-equiv}) and (\ref{lin-op}) for every $t \in (0,t_0)$ that
$$
\lim_{x \to \pm 0} \frac{dv}{dt}(t,x) = \pm v(t,0),
$$
and since $v(0,0) = v_0(0) \neq 0$, we have $v(t,\cdot) \notin C(\mathbb{R})$
for every $t \in (0,t_0)$ with $t_0 > 0$ sufficiently small
because of the finite jump discontinuity at $x = 0$.
Consequently, $v(t,\cdot) \notin C^1_0$ for every $t \in (0,t_0)$.
\end{proof}

Due to the reason in Lemma \ref{lem-1}, we set $v_0(0) = 0$ for the initial condition $v_0 \in H^1 \cap C^1_0$
in the Cauchy problem (\ref{linCH-equiv}). Note that ${\rm dom}(A) \cap C^1_0 \equiv H^1 \cap C^1_0$.
If $v \in H^1 \cap C^1_0$ and $v(0) = 0$, then $(Av)(x)$ in (\ref{lin-op}) is
continuous at $x = 0$ and its definition can be extended for every $x \in \mathbb{R}$ by
\begin{equation}
\label{lin-op-0}
(A v)(x) = \left[ 1 - \varphi(x) \right] v'(x) + \varphi(x) \int_0^x v(y) dy, \quad x \in \mathbb{R}.
\end{equation}
The next result shows that the condition $v_0(0) = 0$ on $v_0 \in H^1 \cap C^1_0$
is not only necessary but also sufficient
for existence of the unique global solution in $H^1 \cap C^1_0$ to the Cauchy problem (\ref{linCH-equiv}) with (\ref{lin-op-0}).

\begin{lemma}
\label{lem-2}
Assume that $v_0 \in H^1 \cap C^1_0$ with $v_0(0) = 0$. There exists the unique global solution
$v \in C(\mathbb{R},H^1 \cap C^1_0)$ to the Cauchy problem (\ref{linCH-equiv}) with (\ref{lin-op-0}) satisfying
$v(t,0) = 0$ for every $t \in \mathbb{R}$.
\end{lemma}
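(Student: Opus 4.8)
The plan is to remove the nonlocality by working with the primitive $w(t,x) := \int_0^x v(t,y)\,dy$ in place of $v$ itself. Writing the Cauchy problem (\ref{linCH-equiv})--(\ref{lin-op-0}) as $v_t = (1-\varphi)v_x + \varphi w$ and integrating in $x$ from $0$, then integrating by parts and using $\varphi(0)=1$, $w(t,0)=0$, together with $\varphi'=-\varphi$ on $\mathbb{R}^+$ and $\varphi'=\varphi$ on $\mathbb{R}^-$, I expect the resulting double integral to collapse, leaving the two \emph{local} first-order equations
$$ w_t = (1-\varphi)\,w_x - \varphi\,w \ \ (x>0), \qquad w_t = (1-\varphi)\,w_x + \varphi\,w \ \ (x<0), $$
each supplemented by $w(t,0)=0$ and $w(0,\cdot)=\int_0^\cdot v_0$. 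Conversely, differentiating these equations in $x$ (again using $\varphi'=\mp\varphi$) reproduces (\ref{lin-op-0}) for $v=w_x$, so the reduced problem for $w$ is \emph{equivalent} to the original one. The opposite signs of the zeroth-order terms on the two half-lines are the source of the growth/decay dichotomy recorded in (\ref{bound-1})--(\ref{bound-2}).

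Next I would solve each transport equation by characteristics. The characteristic ODE is $\dot X = \varphi(X)-1 = e^{-|X|}-1$, for which $X=0$ is a semistable equilibrium: trajectories starting at $x_0>0$ decrease monotonically to $0$ (reaching it only as $t\to+\infty$) while those starting at $x_0<0$ run off to $-\infty$, and the bounded, Lipschitz vector field makes the flow global and reversible in $t$. Hence $\mathbb{R}^+$ and $\mathbb{R}^-$ are invariant, no characteristic crosses the peak, and one has the explicit law $e^{X(t)}=1+(e^{x_0}-1)e^{-t}$ on the right (and the analogous one on the left). Along a characteristic the equation collapses to the scalar linear ODE $\tfrac{d}{dt}w(t,X(t)) = \mp\varphi(X(t))\,w(t,X(t))$, which integrates in closed form; using $\varphi(X)=\dot X+1$ the exponent is explicit, so $w(t,x)$ is obtained as an explicit function of $v_0$ after inverting $x=X(t;x_0)$. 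Setting $v=w_x$ then yields a solution defined for all $t\in\mathbb{R}$.

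I would then verify the three asserted properties. For the constraint, the stationary characteristic at the peak gives $w(t,0)=w_0(0)e^{\mp t}=0$; since $v_0(0)=0$ forces both $w_0(0)=0$ and $w_0'(0)=v_0(0)=0$, passing to the limit $x_0\to 0^{\pm}$ in the explicit formula for $v=w_x$ gives $v(t,0^+)=v(t,0^-)=0$, so $v(t,\cdot)\in C^0(\mathbb{R})$ with $v(t,0)=0$ (this is exactly where the hypothesis $v_0(0)=0$ is used; cf. Lemma \ref{lem-1}). The class $C^1_0$ is preserved because $w$ inherits $C^2$ regularity on each half-line from $w_0$ and the smooth coefficients, so $v=w_x$ is $C^1$ on each of $\mathbb{R}^{\pm}$ with bounded one-sided derivatives, while a generic mismatch $v_x(t,0^+)\neq v_x(t,0^-)$ from the opposite signs is precisely what $C^1_0$ permits. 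Finally, $H^1$-membership and continuity in $t$ follow by transporting the $L^2$ norms to characteristic coordinates, $\int_{\mathbb{R}^{\pm}}(v^2+v_x^2)\,dx = \int_{\mathbb{R}^{\pm}}(v^2+v_x^2)\,\partial_{x_0}X\,dx_0$, where the explicit Jacobian and exponential factors give $\|v(t,\cdot)\|_{H^1(\mathbb{R}^{\pm})}\le C(t)\|v_0\|_{H^1(\mathbb{R}^{\pm})}$ with $C(\cdot)$ continuous; carrying out this computation exactly is what produces the sharp identities (\ref{bound-1})--(\ref{bound-2}). Uniqueness is immediate from the equivalence with the reduced problem and ODE uniqueness along characteristics.

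The main obstacle I anticipate is the degeneracy of the transport speed $1-\varphi$ at the peak. On $\mathbb{R}^+$ the characteristics accumulate at the origin as $t\to+\infty$, so the change of variables $x_0\mapsto X(t;x_0)$ degenerates there and the Jacobian $\partial_{x_0}X$ collapses; this compression is in fact the mechanism that amplifies $v=w_x$ and drives the exponential growth in (\ref{bound-1}), and dually the spreading of the left characteristics forces the decay in (\ref{bound-2}). Controlling the solution and the Jacobian uniformly through this degenerate region—confirming $L^2$-integrability up to the peak, the continuity/differentiability of $v$ on each side, and that the one-sided limits glue to $v(t,0)=0$ while allowing the jump in $v_x$—is the delicate part, and it is also what pins down the exact exponential rates.
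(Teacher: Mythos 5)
Your proposal is correct and follows essentially the same route as the paper's proof: reduction to the primitive $w=\int_0^x v$ to localize the equation (yielding $w_t+(\varphi-1)w_x-\varphi'w=0$ piecewise), the characteristic ODE $\dot X=\varphi(X)-1$ with the peak as an invariant critical point and the same explicit flow, recovery of $v$ from $w$, and verification of the $C^1_0$ and $H^1$ properties through the Jacobian $X_s$. The only (cosmetic) difference is that the paper integrates a second ODE $\frac{dV}{dt}=\varphi(X)W$ along characteristics to get $v$ directly, whereas you recover $v=w_x$ by differentiating in Eulerian variables; both give the same explicit solution.
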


\begin{proof}
We solve the evolution problem (\ref{linCH-equiv}) with (\ref{lin-op-0})
by using the method of characteristics piecewise for $x > 0$ and $x < 0$.
The family of characteristic curves $X(t,s)$ satisfying the initial-value problem
\begin{equation}
\label{char}
\left\{ \begin{array}{l}
\frac{dX}{dt} = \varphi(X)-1, \\
X |_{t=0} = s, \end{array} \right.
\end{equation}
are uniquely defined for every $s \in \mathbb{R}$
thanks to the Lipschitz continuity of $\varphi$ on $\mathbb{R}$. The peak location at $X = 0$
is the critical point which remains invariant under the time flow of the initial-value problem (\ref{char}).
For $s \in \mathbb{R}$ and $t \in \mathbb{R}$, we obtain the family of characteristic curves in the exact form
\begin{equation}
\label{char-sol}
X(t,s) = \left\{ \begin{array}{ll}
\log\left[ 1 + (e^s - 1) e^{-t} \right], \quad & s > 0, \\
0, \quad & s = 0, \\
-\log\left[ 1 + (e^{-s} - 1) e^{t} \right], \quad & s < 0,
\end{array} \right.
\end{equation}
with $\lim_{s \to 0^{\pm}} X(t,s) = 0$ for every $t \in \mathbb{R}$.
Let us define
\begin{equation}
\label{w-0}
w_0(x) := \int_0^x v_0(y) dy, \quad w(t,x) := \int_0^x v(t,y) dy,
\end{equation}
then $w_0 \in C^1(\mathbb{R})$ with $w_0(0) = w_0'(0) = 0$. We are looking for
the solution $w(t,\cdot) \in C^1(\mathbb{R})$ with $w(t,0) = w_x(t,0) = 0$
for every $t \in \mathbb{R}$. Substituting $v = w_x$ into $v_t = (1-\varphi) v_x + \varphi w$
yields
$$
w_{tx} + (\varphi - 1) w_{xx} - \varphi w = 0, \quad x \in \mathbb{R},
$$
which can be integrated as follows:
$$
w_t + (\varphi - 1) w_x - \varphi' w = {\rm const} = 0, \quad x \in \mathbb{R},
$$
where the integration constant is zero thanks to the boundary condition $w(t,0) = 0$
for every $t \in \mathbb{R}$. Along each characteristic curve satisfying (\ref{char}),
$W(t,s) := w(t,X(t,s))$ satisfies the initial-value problem:
\begin{equation}
\label{w-eq}
\left\{ \begin{array}{l}
\frac{dW}{dt} = \varphi'(X(t,s)) W, \\
W |_{t=0} = w_0(s), \end{array} \right.
\end{equation}
which can be solved uniquely in the exact form:
\begin{equation}
\label{w-sol}
W(t,s) = w_0(s) X_s(t,s), \quad
X_s(t,s) = \left\{ \begin{array}{ll}
\frac{1}{1 + (e^t - 1) e^{-s}}, \quad & s > 0, \\
\frac{1}{1 + (e^{-t} - 1) e^{s}}, \quad & s < 0,
\end{array} \right.
\end{equation}
with $\lim_{s \to 0^{\pm}} W(t,s) = 0$ for every $t \in \mathbb{R}$ thanks to $w_0(0) = 0$.
Note the different limits $\lim_{s \to 0^{\pm}} X_s(t,s) = e^{\mp t}$ if $t \neq 0$,
hence $X(t,\cdot) \in C^1_0$ if $t \neq 0$. Also note that
\begin{equation}
\label{property-1}
X_s(t,s) > 0, \quad \mbox{\rm for every } \; s \neq 0
\end{equation}
and
\begin{equation}
\label{property-2}
X_s(t,s) \to 1 \quad \mbox{\rm as } \; |s| \to \infty.
\end{equation}

Finally, we solve (\ref{linCH-equiv}) with (\ref{lin-op-0})
by using (\ref{char-sol}) and (\ref{w-sol}). Along each characteristic curve satisfying (\ref{char}),
$V(t,s) := v(t,X(t,s))$ satisfies the initial-value problem:
\begin{equation}
\label{v-eq}
\left\{ \begin{array}{l}
\frac{dV}{dt} = \varphi(X(t,s)) W(t,s), \\
V |_{t=0} = v_0(s), \end{array} \right.
\end{equation}
which can be solved uniquely in the exact form:
\begin{equation}
\label{v-sol}
V(t,s) = v_0(s) + w_0(s) Y(t,s), \quad
Y(t,s) = \left\{ \begin{array}{ll}
\frac{(e^t-1) e^{-s}}{1 + (e^t - 1) e^{-s}}, \quad & s > 0, \\
\frac{(1-e^{-t}) e^s}{1 + (e^{-t} - 1) e^{s}}, \quad & s < 0,
\end{array} \right.
\end{equation}
with $\lim_{s \to 0^{\pm}} V(t,s) = 0$ for every $t \in \mathbb{R}$ thanks to $w_0(0) = v_0(0) = 0$.
Furthermore, $V(t,s)$ is continuously differentiable in $s$ piecewise for $s > 0$ and $s < 0$
for every $t \in \mathbb{R}$ since $v_0 \in C^1_0$ and $w_0 \in C^1$.
Hence, $V(t,\cdot) \in C^1_0$ for every $t \in \mathbb{R}$.
Also, thanks to the properties (\ref{property-1}) and (\ref{property-2}),
we have $V(t,\cdot) \in H^1$ for every $t \in \mathbb{R}$ since
$v_0 \in H^1$ and $w_0 \varphi \in H^1$.

Finally, thanks again to the properties (\ref{property-1}), (\ref{property-2}), and $X(t,0) = 0$ for every $t \in \mathbb{R}$,
the change of coordinates $(t,s) \to (t,X)$ is
a $C^1_0$ invertible transformation so that the solution $v(t,\cdot) = V(t,s(t,\cdot))$ belongs to $H^1 \cap C^1_0$
and satisfies $v(t,0) = V(t,0) = 0$ for every $t \in \mathbb{R}$.
\end{proof}

By analyzing the exact solution $v(t,\cdot) \in H^1 \cap C^1_0$ of Lemma \ref{lem-2} in $t$,
we show that $v_x(t,\cdot)$ grows in $t$ due to characteristics with $x > 0$.
At the same time, if we add an additional constraint $v_0 \in L^1$ on the initial data,
we can also show that $v(t,\cdot)$ remains bounded in the supremum norm as $t \to \infty$.

\begin{lemma}
\label{lem-3}
Assume that $v_0 \in H^1 \cap C^1_0 \cap L^1$ with $v_0(0) = 0$ and $\alpha := \lim_{x \to 0^+} v_0'(x) \neq 0$.
Then, we have for every $t \geq 0$:
\begin{equation}
\label{bounds-lin-1}
\| v(t,\cdot) \|_{L^{\infty}(0,\infty)} \leq \| v_0 \|_{L^{\infty}(0,\infty)} + \| v_0 \|_{L^1(0,\infty)}, \quad
\| v(t,\cdot) \|_{L^{\infty}(-\infty,0)} \leq 2 \| v_0 \|_{L^{\infty}(-\infty,0)}
\end{equation}
and
\begin{equation}
\label{bounds-lin-2}
\| v_x(t,\cdot) \|_{L^{\infty}(0,\infty)} \geq \alpha e^t, \quad
\| v_x(t,\cdot) \|_{L^{\infty}(-\infty,0)} \leq \| v_0' \|_{L^{\infty}(-\infty,0)} + 2 \| v_0 \|_{L^{\infty}(-\infty,0)}.
\end{equation}
\end{lemma}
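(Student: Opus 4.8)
The plan is to read off all four estimates directly from the closed-form solution constructed in Lemma \ref{lem-2}: along characteristics $v(t,X(t,s)) = V(t,s)$ with $V(t,s) = v_0(s) + w_0(s) Y(t,s)$ as in (\ref{v-sol}), where $w_0(s) = \int_0^s v_0(y)\,dy$. Since the map $s \mapsto X(t,s)$ in (\ref{char-sol}) is, for each fixed $t$, a monotone bijection of $(0,\infty)$ onto itself and of $(-\infty,0)$ onto itself (this is property (\ref{property-1}) together with $X(t,0)=0$), I may replace the suprema over $x$ on each half-line by suprema over $s$. On the right, $s > 0$, the factor $Y(t,s)=\frac{(e^t-1)e^{-s}}{1+(e^t-1)e^{-s}}$ lies in $[0,1)$, so $|V(t,s)| \le |v_0(s)| + |w_0(s)| \le \|v_0\|_{L^\infty(0,\infty)} + \|v_0\|_{L^1(0,\infty)}$, which is the first estimate in (\ref{bounds-lin-1}).

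The negative half-line is the delicate case and is where the real work lies, because there $Y(t,s)$ is \emph{not} bounded uniformly in $t$ (it tends to $e^t-1$ as $s\to 0^-$), so $w_0$ and $Y$ cannot be estimated separately. Instead I would bound the product $w_0(s)Y(t,s)$ directly. Writing $s=-\tau$ with $\tau>0$, using $|w_0(s)| \le \tau\,\|v_0\|_{L^\infty(-\infty,0)}$ and the monotonicity of $u\mapsto u/(1-u)$ to get $(-s)Y(t,s) \le \tau e^{-\tau}/(1-e^{-\tau}) = \tau/(e^\tau-1)$, the crux is the elementary inequality $\sup_{\tau>0}\tau/(e^\tau-1) = 1$ (the value $1$ being approached as $\tau\to 0^+$). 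This yields $|w_0(s)Y(t,s)| \le \|v_0\|_{L^\infty(-\infty,0)}$ uniformly in $t$, hence $|V(t,s)| \le 2\|v_0\|_{L^\infty(-\infty,0)}$ and the second bound in (\ref{bounds-lin-1}).

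For the derivative bounds I would use $v_x(t,X(t,s)) = V_s(t,s)/X_s(t,s)$. Differentiating $V=v_0+w_0Y$ with $w_0'=v_0$ and $Y_s=X_{ss}$ gives the clean formula $v_x = v_0'/X_s + v_0\,(Y/X_s) + w_0\,(X_{ss}/X_s)$. A short computation from (\ref{char-sol})--(\ref{w-sol}) records the two identities $X_{ss}/X_s = Y$ for all $s\neq 0$ and, on $(-\infty,0)$, $Y/X_s = 1 - 1/X_s \in [0,1)$ with $1/X_s \in (0,1)$. Substituting these, the last term is exactly $w_0Y$, already controlled in the previous paragraph, while the first two are bounded by $\|v_0'\|_{L^\infty(-\infty,0)}$ and $\|v_0\|_{L^\infty(-\infty,0)}$ respectively, which gives the upper bound in (\ref{bounds-lin-2}). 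The lower bound follows by evaluating the same formula in the limit $s\to 0^+$: there $v_0(0)=w_0(0)=0$, $v_0'(0^+)=\alpha$ and $X_s(t,0^+)=e^{-t}$, so $v_x(t,0^+)=\alpha e^t$, whence $\|v_x(t,\cdot)\|_{L^\infty(0,\infty)} \ge \alpha e^t$.

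The only genuinely nontrivial step is the uniform-in-$t$ control of $w_0(s)Y(t,s)$ on the negative half-line; everything else is bookkeeping once the identities $X_{ss}/X_s = Y$ and $Y = \pm(1-1/X_s)$ are in hand. I expect the main obstacle to be organizing the negative-$s$ estimates so that the apparent $e^t$ growth of $Y$ near the peak is seen to be exactly absorbed by the vanishing of $w_0$ there, which is precisely the content of $\sup_{\tau>0}\tau/(e^\tau-1)=1$ and which also explains why the instability in (\ref{bounds-lin-2}) is driven by the characteristics to the right of the peak only.
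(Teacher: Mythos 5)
Your proposal is essentially the paper's own proof: it uses the same explicit characteristic solution $V = v_0 + w_0 Y$ from (\ref{v-sol}), the same crux estimate (your $\sup_{\tau>0}\tau/(e^{\tau}-1)=1$ is exactly the second equality in the paper's (\ref{elementary}), proved there by monotonicity in $t$ and then in $|s|$), the same chain rule $U = V_s/X_s$, and the same evaluation at $s \to 0^{+}$ for the lower bound. One minor slip: the identity $Y_s = X_{ss}$ holds only for $s<0$, while for $s>0$ one has $Y = 1 - X_s$, hence $Y_s = -X_{ss}$ and the last term of your derivative formula should be $-w_0 Y$ (matching the paper's (\ref{u-sol})); this is inconsequential, since on the positive half-line that term vanishes in the limit $s \to 0^{+}$ used for the lower bound, and on the negative half-line, where the term is actually needed for the upper bound, your identity and formula are correct.
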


\begin{proof}
If $v_0 \in H^1 \cap C^1_0$ with $v_0(0) = 0$, we have $V(t,\cdot) \in H^1 \cap C^1_0$
for every $t \in \mathbb{R}$ by Lemma \ref{lem-2}.
If in addition $v_0 \in L^1$, then $w_0 \in L^{\infty}$ thanks to
the bound $\| w_0 \|_{L^{\infty}} \leq \| v_0 \|_{L^1}$.

We obtain by elementary computations:
\begin{equation}
\label{elementary}
\max_{t \geq 0, s \geq 0} \frac{(e^t-1) e^{-s}}{1 + (e^t - 1) e^{-s}} = 1, \quad
\max_{t \geq 0, s \leq 0} \frac{(e^{-t}-1) s e^s}{1 + (e^{-t} - 1) e^{s}} = 1.
\end{equation}
While the proof of the first equality is obvious, the proof of the second equality consists of
several steps. For fixed $s \leq 0$, the function
$$
Z(t,s) := \frac{(e^{-t}-1) s e^s}{1 + (e^{-t} - 1) e^{s}}
$$
is monotonically increasing in $t$ so that
$$
Z_{\rm max}(s) := \max_{t \geq 0} Z(t,s) = \lim_{t \to \infty} Z(t,s) = \frac{|s| e^{-|s|}}{1 - e^{-|s|}}
$$
Then, $Z_{\rm max}(s)$ is monotonically decreasing in $|s|$ attaining the maximal value $1$ as $|s| \to 0$.
It follows from (\ref{v-sol}) and (\ref{elementary}) that
\begin{equation}
\label{V-estimate-1}
|V(t,s)| \leq |v_0(s)| + |w_0(s)|, \qquad t \geq 0, \quad s \geq 0
\end{equation}
and
\begin{equation}
\label{V-estimate-2}
|V(t,s)| \leq |v_0(s)| + \left| \frac{w_0(s)}{s} \right|, \qquad t \geq 0, \quad s \leq 0,
\end{equation}
where it follows from (\ref{w-0}) that
$$
\left| \frac{w_0(s)}{s} \right| \leq \| v_0 \|_{L^{\infty}}.
$$
Bounds (\ref{bounds-lin-1}) follow from the estimates (\ref{V-estimate-1}) and (\ref{V-estimate-2}).

By using the chain rule $U(t,s) = V_s(t,s)/X_s(t,s)$ for
$U(t,s) := v_x(t,X(t,s))$, we obtain from (\ref{v-sol}) the exact solution for $s > 0$ and $t > 0$:
\begin{equation}
\label{u-sol}
U(t,s) = v_0'(s) \left[ 1 + (e^t - 1) e^{-s} \right]
+ v_0(s) (e^t - 1) e^{-s} - w_0(s) \frac{(e^t-1) e^{-s}}{1 + (e^t - 1) e^{-s}}.
\end{equation}
If $\alpha := \lim_{x \to 0^+} v_0'(x) \neq 0$, then
the first bound in (\ref{bounds-lin-2}) follows from the estimate
\begin{equation}
\label{u-sol-limits}
\| U(t,\cdot) \|_{L^{\infty}(0,\infty)} \geq \lim_{s \to 0^+} U(t,s)
= \alpha e^t, \quad t \geq 0,
\end{equation}
thanks to $w_0(0) = v_0(0) = 0$.

Similarly, we obtain from (\ref{v-sol}) the exact solution for $s < 0$ and $t > 0$:
\begin{equation}
\label{u-sol-neg}
U(t,s) = v_0'(s) \left[ 1 + (e^{-t} - 1) e^{s} \right]
+ v_0(s) (1 - e^{-t}) e^{s} + w_0(s) \frac{(1 - e^{-t}) e^{s}}{1 + (e^{-t} - 1) e^{s}}.
\end{equation}
By using the second equality in (\ref{elementary}), we obtain from (\ref{u-sol-neg})  that
\begin{equation}
\label{u-sol-limits-neg}
|U(t,s)| \leq |v_0'(s)| + |v_0(s)| + \left| \frac{w_0(s)}{s} \right|, \quad t \geq 0, \quad s \leq 0,
\end{equation}
which yields the second bound in (\ref{bounds-lin-2}).
\end{proof}

\begin{remark}
\label{rem-2}
If $v_0 \in C^1$ with $v_0(0) = 0$ and $v_0'(0) \neq 0$, then
$\lim_{s \to 0^{\pm}} U(t,s) = v_0'(0) e^{\pm t}$ so that $U(t,\cdot) \notin C^1$ for $t \neq 0$.
Consequently, $v(t,\cdot) \notin C^1$ for $t \neq 0$.
\end{remark}

The exponential growth of $v_x(t,\cdot)$ as $t \to \infty$ in Lemma \ref{lem-3} discovers
the linear instability of the peakon in the supremum norm on $v_x$.
The same instability is observed in the $H^1$ norm, as formulated in Theorem \ref{theorem-linear}.
Below we prove equalities (\ref{bound-1}) and (\ref{bound-2}) in Theorem \ref{theorem-linear}
for $v_0 \in H^1$ with $v_0(0) = 0$ without additional requirements $v_0 \in C^1_0$ and $v_0 \in L^1$.

\vspace{0.2cm}

{\em Proof of Theorem \ref{theorem-linear}.} By working on $[0,\infty)$,
wee use the exact solutions in (\ref{v-sol}) and (\ref{u-sol}), integrate by parts, and obtain
\begin{equation}\label{L2-norm-p}
\| v(t,\cdot) \|_{L^2(0,\infty)}^2 = \int_0^{\infty}\frac{[v_0(s)]^2}{1 + (e^t - 1) e^{-s}} d s +
\int_0^{\infty}\frac{[w_0(s)]^2 (e^t-1) e^{-s}}{[1 + (e^t - 1) e^{-s}]^3}ds
\end{equation}
and
\begin{eqnarray}
\nonumber
\| v_x(t,\cdot) \|_{L^2(0,\infty)}^2 & = & \int_0^{\infty} [v_0'(s)]^2 [1 + (e^t - 1) e^{-s}] d s
+ 2 \int_0^{\infty} [v_0(s)]^2 (e^t - 1) e^{-s} d s \\
\label{der-L2-norm-p}
& \phantom{t} &
+ \int_0^{\infty} \frac{[v_0(s)]^2 (e^t - 1) e^{-s}}{1 + (e^t - 1) e^{-s}} d s
- \int_0^{\infty}\frac{w_0^2(s)(e^t-1)e^{-s}}{[1 + (e^t - 1) e^{-s}]^3}ds.
\end{eqnarray}
In order to obtain (\ref{L2-norm-p}), we use (\ref{v-sol}) and the chain rule to write
$$
\| v(t,\cdot) \|_{L^2(0,\infty)}^2 = \int_0^{\infty}
\left[ v_0(s) + w_0(s) \frac{(e^t-1) e^{-s}}{1 + (e^t - 1) e^{-s}} \right]^2 \frac{ds}{1 + (e^t - 1) e^{-s}}.
$$
We expand the square and integrate the middle term by parts with the boundary conditions
$w_0(0) = 0$ and $\lim_{s \to \infty} [w_0(s)]^2 e^{-s} = 0$
thanks to the H\"older inequality:
\begin{equation*}
[w_0(x)]^2=\left(\int_0^x v_0(s)ds \right)^2\leq x||v_0||_{L^2(0,\infty)}^2,\ \ \ \ \ \ x>0.
\end{equation*}
As a result, straightforward computations yield (\ref{L2-norm-p}). The computations
for (\ref{der-L2-norm-p}) are similar but longer with three terms integrated by parts
with the boundary conditions $v_0(0) = 0$ and $\lim_{s \to \infty} v_0(s) = 0$.
Summing (\ref{L2-norm-p}) and (\ref{der-L2-norm-p}) yields (\ref{bound-1}).

Computations on $(-\infty,0]$  are similar from the exact solutions in (\ref{v-sol}) and (\ref{u-sol-neg}).
Integration by parts yield
\begin{equation}\label{L2-norm-p-neg}
\| v(t,\cdot) \|_{L^2(-\infty,0)}^2 = \int_{-\infty}^0 \frac{[v_0(s)]^2}{1 + (e^{-t} - 1) e^{s}} d s +
\int_{-\infty}^0 \frac{[w_0(s)]^2 (e^{-t}-1) e^{s}}{[1 + (e^{-t} - 1) e^{s}]^3}ds
\end{equation}
and
\begin{eqnarray}
\nonumber
\| v_x(t,\cdot) \|_{L^2(-\infty,0)}^2 & = & \int_{-\infty}^0 [v_0'(s)]^2 [1 + (e^{-t} - 1) e^{s}] d s
+ 2 \int_{-\infty}^0 [v_0(s)]^2 (e^{-t} - 1) e^{s} d s \\
\label{der-L2-norm-p-neg}
& \phantom{t} &
+ \int_{-\infty}^0 \frac{[v_0(s)]^2 (e^{-t} - 1) e^{s}}{1 + (e^{-t} - 1) e^{s}} d s
- \int_{-\infty}^0\frac{w_0^2(s)(e^{-t}-1)e^{s}}{[1 + (e^{-t} - 1) e^{s}]^3}ds.
\end{eqnarray}
Summing (\ref{L2-norm-p-neg}) and (\ref{der-L2-norm-p-neg}) yields (\ref{bound-2}).
$\Box$

\section{Nonlinear evolution near a single peakon}

Global existence and uniqueness of the conservative solution (\ref{conservative-solution}) to the Cauchy problem
for the CH equation in the convolution form (\ref{CHconv}) with initial data $u_0 \in H^1$
was proven in \cite{BC1,BCZ}. Continuous dependence of the conservative solution (\ref{LWP-peakons})
for initial data $u_0 \in H^1 \cap W^{1,\infty}$ was proven in \cite{Linares}. We rewrite the Cauchy problem in the form:
\begin{equation}
\label{CHevol}
\left\{
\begin{array}{l}
u_t + u u_x + Q[u] = 0, \quad t > 0,\\
u |_{t = 0} = u_0,
\end{array} \right.
\end{equation}
where
\begin{equation}
\label{Q-def}
Q[u](x) := \frac{1}{2} \int_{\mathbb{R}} \varphi'(x-y) \left( [u(y)]^2 + \frac{1}{2} [u'(y)]^2 \right) dy, \quad
x \in \mathbb{R}.
\end{equation}
The following lemma shows that $Q[u](x)$ is continuous for every $x \in \mathbb{R}$ if $u \in H^1$.

\begin{lemma}
\label{lem-nonlinear-1}
For every $u \in H^1$, we have $Q[u] \in C(\mathbb{R})$.
\end{lemma}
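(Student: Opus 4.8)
The plan is to reduce the statement to a routine continuity-of-convolution argument. First I would record the elementary facts: since $H^1(\mathbb{R})$ is continuously embedded into $C(\mathbb{R}) \cap L^{\infty}(\mathbb{R})$ and $u, u' \in L^2(\mathbb{R})$, the function
\[
g := u^2 + \tfrac{1}{2} (u')^2
\]
is nonnegative and belongs to $L^1(\mathbb{R})$, with $\| g \|_{L^1} = \| u \|_{L^2}^2 + \tfrac{1}{2} \| u' \|_{L^2}^2 < \infty$. Thus $Q[u] = \tfrac{1}{2}\, \varphi' \ast g$ is the convolution of $g \in L^1(\mathbb{R})$ against the kernel $\varphi'(x) = -{\rm sign}(x) e^{-|x|}$, which satisfies $\| \varphi' \|_{L^{\infty}} = 1$ and is continuous on $\mathbb{R} \setminus \{0\}$.

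Boundedness is immediate from $|Q[u](x)| \leq \tfrac{1}{2} \| \varphi' \|_{L^{\infty}} \| g \|_{L^1}$ for every $x \in \mathbb{R}$. For continuity at an arbitrary point $x_0 \in \mathbb{R}$, I would estimate
\[
|Q[u](x) - Q[u](x_0)| \leq \frac{1}{2} \int_{\mathbb{R}} |\varphi'(x-y) - \varphi'(x_0-y)|\, |g(y)|\, dy
\]
and pass to the limit $x \to x_0$ by dominated convergence. The integrand is dominated by $2 |g(y)| \in L^1(\mathbb{R})$ uniformly in $x$, while for every fixed $y \neq x_0$ one has $\varphi'(x-y) \to \varphi'(x_0-y)$ as $x \to x_0$ by continuity of $\varphi'$ away from the origin. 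Since $\{x_0\}$ is a Lebesgue-null set, this pointwise convergence holds for almost every $y$, so dominated convergence forces the integral to tend to $0$. Hence $Q[u]$ is continuous at $x_0$, and as $x_0 \in \mathbb{R}$ is arbitrary, $Q[u] \in C(\mathbb{R})$.

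The only point requiring care --- and the main, albeit mild, obstacle --- is the jump discontinuity of $\varphi'$ at the origin: the integrand $\varphi'(x-y)$ fails to converge at the single point $y = x_0$. This is harmless for the dominated convergence argument precisely because the exceptional set has measure zero, so almost-everywhere convergence is all that is needed. As an alternative that sidesteps the jump entirely, I would use the explicit splitting
\[
Q[u](x) = -\frac{1}{2} e^{-x} \int_{-\infty}^x e^{y} g(y)\, dy + \frac{1}{2} e^{x} \int_x^{\infty} e^{-y} g(y)\, dy,
\]
noting that $e^{y} g(y) \in L^1(-\infty,a)$ and $e^{-y} g(y) \in L^1(a,\infty)$ for every $a \in \mathbb{R}$, and then invoking absolute continuity of the Lebesgue integral to conclude that each one-sided integral is continuous in $x$; the prefactors $e^{\mp x}$ are smooth, so $Q[u]$ is continuous as a product of continuous functions.
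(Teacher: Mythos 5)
Your proof is correct, but your primary argument is a genuinely different route from the paper's. The paper proves the lemma exactly the way your ``alternative'' at the end does: it writes out the convolution explicitly, splitting at $y = x$ according to the sign of $x-y$,
\[
Q[u](x) = \tfrac{1}{2} e^{x} \int_{x}^{\infty} e^{-y} g(y)\, dy - \tfrac{1}{2} e^{-x} \int_{-\infty}^{x} e^{y} g(y)\, dy, \qquad g := u^2 + \tfrac{1}{2}(u')^2 \in L^1(\mathbb{R}),
\]
and concludes continuity of each term from the fact that an indefinite integral of an absolutely integrable function is continuous, the prefactors $e^{\pm x}$ being smooth. Your main argument instead works directly with the convolution form: since $\varphi'$ is bounded by $1$ and continuous off a Lebesgue-null set $\{0\}$, while $g \in L^1$, dominated convergence (applied along arbitrary sequences $x_n \to x_0$) gives $Q[u](x) \to Q[u](x_0)$; the single point $y = x_0$ where pointwise convergence of the integrand fails is harmless. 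Your observation that this is the only delicate point is exactly right. What each approach buys: the paper's splitting exploits the explicit exponential kernel, which pays off later in the paper (the same explicit representation is reused, e.g., in the computation of $F_0$ in Lemma \ref{lem-nonlinear-5} and in the extension to $Q[f] \in C^1_0$ in Lemma \ref{lem-nonlinear-3}); your dominated-convergence argument is more general and more robust, as it shows $h \ast g \in C(\mathbb{R}) \cap L^\infty(\mathbb{R})$ for any $g \in L^1$ and any bounded, almost-everywhere continuous kernel $h$, with no need for a closed-form antiderivative structure.
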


\begin{proof}
We can rewrite (\ref{Q-def}) in the explicit form:
\begin{equation*}
Q[u](x) = \frac{1}{2} e^{x} \int_{x}^{\infty} e^{-y} \left( [u(y)]^2 + \frac{1}{2} [u'(y)]^2 \right) dy -
\frac{1}{2} e^{-x} \int_{-\infty}^{x} e^{y} \left( [u(y)]^2 + \frac{1}{2} [u'(y)]^2 \right) dy.
\end{equation*}
Each integral is a continuous function for every $x \in \mathbb{R}$ since it is given by
an integral of the absolutely integrable function. Hence $Q[u]$ is continuous on $\mathbb{R}$.
\end{proof}

The unique global conservative solution (\ref{conservative-solution}) to the Cauchy problem (\ref{CHevol})
satisfies the weak formulation
\begin{equation}
\label{CHweak}
\int_{0}^{\infty} \int_{\mathbb{R}} \left( u \psi_t + \frac{1}{2} u^2 \psi_x - Q[u] \psi \right) dx dt
+ \int_{\mathbb{R}} u_0(x) \psi(0,x) dx = 0,
\end{equation}
where the equality is true for every test function $\psi \in C^1_c(\mathbb{R}^+ \times \mathbb{R})$.
We consider the class of solutions with a single peak placed at the point
$\xi(t)$ so that $u(t,\cdot + \xi(t)) \in C^1_0$ for every $t > 0$, where $C^1_0$ is defined by (\ref{C-1-0}).
The following lemma shows that the single peak moves with its local characteristic speed
as in (\ref{characteristic-speed}).

\begin{lemma}
\label{lem-nonlinear-2}
Assume that there exists $T > 0$ such that the weak global conservative solution (\ref{conservative-solution})
to the equation (\ref{CHweak}) satisfies $u(t,\cdot + \xi(t)) \in H^1 \cap C^1_0$ for every $t \in (0,T)$
with a single peak located at $x = \xi(t)$. Then, $\xi \in C^1(0,T)$ satisfies
\begin{equation}
\label{peak}
\frac{d \xi}{dt} = u(t,\xi(t)), \quad t \in (0,T).
\end{equation}
\end{lemma}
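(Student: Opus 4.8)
The plan is to pin down the speed of the peak by comparing the one-sided limits of the Camassa--Holm equation at $x=\xi(t)$ against the two ways of differentiating the peak value $M(t):=u(t,\xi(t))$ in time. Write $a_\pm(t):=\lim_{x\to\xi(t)^\pm}u_x(t,x)$ for the one-sided slopes, which are finite since $u(t,\cdot+\xi(t))\in C^1_0$, and set $Q(t):=Q[u](t,\xi(t))$, which is well defined and continuous by Lemma \ref{lem-nonlinear-1}. Because $u(t,\cdot+\xi(t))\in C^1_0$, the solution is a classical solution of $u_t+uu_x+Q[u]=0$ on each of the open regions $\Omega_\pm:=\{(t,x):\pm(x-\xi(t))>0\}$, with $C^1$ extensions $u^\pm$ up to the peak curve $\Gamma:=\{x=\xi(t)\}$. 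Letting $x\to\xi(t)^\pm$ in the equation and using continuity of $u$ and of $Q[u]$ across $\Gamma$ gives the one-sided identities
\[
u_t(t,\xi(t)^\pm)=-M(t)\,a_\pm(t)-Q(t).
\]

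Next I would differentiate $M(t)=u(t,\xi(t))$ from each side. At any time $t$ where $\xi$ is differentiable, since $M(t)=u^\pm(t,\xi(t))$ with $u^\pm\in C^1$, the chain rule gives $\dot M(t)=u_t(t,\xi(t)^\pm)+\dot\xi(t)\,a_\pm(t)$. As $\dot M(t)$ is a single number, the two expressions must coincide; substituting the one-sided identities above yields $(\dot\xi(t)-M(t))\,a_+(t)=(\dot\xi(t)-M(t))\,a_-(t)$, i.e.
\[
\bigl(\dot\xi(t)-u(t,\xi(t))\bigr)\bigl(a_+(t)-a_-(t)\bigr)=0.
\]
A genuine peak is a corner, so $a_+(t)\neq a_-(t)$ (indeed $a_+\le 0\le a_-$ with strict inequality, consistent with the peakon slopes $\mp1$); dividing out the nonzero factor gives $\dot\xi(t)=u(t,\xi(t))$. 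Finally, since $u\in C(\mathbb{R}^+,H^1)\hookrightarrow C(\mathbb{R}^+\times\mathbb{R})$ and $\xi$ is continuous, $t\mapsto u(t,\xi(t))$ is continuous; hence once $\dot\xi$ is known to exist a.e. and equal this continuous function, a standard bootstrap upgrades $\xi$ to $C^1(0,T)$ and delivers (\ref{peak}).

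The chain-rule step above presupposes regularity of $\xi$, and this is where the real work lies. The plan is to show $\xi$ is locally Lipschitz (hence differentiable a.e.) by a comparison argument. From $u_t=-uu_x-Q[u]$ together with the bounds $u\in L^\infty_{\rm loc}(W^{1,\infty})$ and boundedness of $Q[u]$ one gets $\|u_t(t,\cdot)\|_{L^\infty}\le C$ on compact time intervals, so $u$ is Lipschitz in $t$ uniformly in $x$. Using the quantitative corner $u(t,\xi(t)+r)\le M(t)-\delta|r|$ for small $r$, valid with a uniform $\delta>0$ on compact time intervals since the one-sided slopes stay bounded away from $0$ near the peakon profile, the estimate $M(s)=u(s,\xi(s))\le u(t,\xi(s))+C|t-s|\le M(t)-\delta|\xi(s)-\xi(t)|+C|t-s|$ combined with $M(s)\ge u(s,\xi(t))\ge M(t)-C|t-s|$ forces $|\xi(s)-\xi(t)|\le (2C/\delta)|t-s|$, giving the Lipschitz continuity of $\xi$.

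I expect the main obstacle to be precisely this localization step: it requires the quantitatively nondegenerate corner of the peak (a uniform lower bound $a_-(t)-a_+(t)\ge 2\delta>0$ on compact time intervals) and the promotion of the one-sided $C^1_0$-regularity to genuine $C^1$ extensions of $u$ up to the moving curve $\Gamma$, so that the one-sided PDE limits and the chain rule are rigorous. Once these regularity facts are secured, the algebraic identity $(\dot\xi-u(t,\xi))(a_+-a_-)=0$ and the continuity bootstrap yield (\ref{peak}) with little further effort.
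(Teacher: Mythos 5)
Your proof is correct and takes essentially the same route as the paper's: pass to one-sided limits of the equation $u_t + uu_x + Q[u]=0$ at the peak (using continuity of $u$ and of $Q[u]$ from Lemma \ref{lem-nonlinear-1}), compare with the chain-rule differentiation of $u(t,\xi(t))$ from the two sides, and divide by the nonzero jump $[u_x]_-^+$ to obtain (\ref{peak}). The only difference is that you additionally supply a local Lipschitz argument for $\xi$ to legitimize the chain-rule step, a regularity point that the paper's proof assumes implicitly without comment.
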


\begin{proof}
Integrating (\ref{CHweak}) by parts on $(-\infty,\xi(t))$ and $(\xi(t),\infty)$
and using the fact that $u(t,\cdot) \in C(\mathbb{R})$, we obtain
the following equations piecewise outside the peak's location:
\begin{equation*}
u_t(t,x) + u(t,x) u_x(t,x) + Q[u](t,x) = 0, \quad \pm \left[ x-\xi(t) \right] > 0.
\end{equation*}
Since $u(t,\cdot) \in H^1$ for every $t \in \mathbb{R}^+$,
$Q[u]$ is a continuous function of $x$ on $\mathbb{R}$ for every $t \in \mathbb{R}^+$ by Lemma \ref{lem-nonlinear-1}.
Therefore, the function $u(t,\cdot + \xi(t)) \in C^1_0$ satisfies:
\begin{equation*}
[u_t]^+_- + u(t,\xi(t)) [u_x]^+_-  = 0, \quad t \in (0,T),
\end{equation*}
where $[u_x]^+_-$ is the jump of $u_x$ across the peak location. On the other hand,
since $u(t,\xi(t))$ is continuous and $u(t,\cdot + \xi(t)) \in C^1_0$, we differentiate $u(t,\xi(t))$
continuously on both sides from $x = \xi(t)$ and obtain
\begin{equation*}
[u_t]^+_- + \frac{d \xi}{dt} [u_x]^+_- = 0, \quad t \in (0,T).
\end{equation*}
Since $[u_x]^+_- \neq 0$ if $u \notin C^1(\mathbb{R})$, then $\xi(t)$ satisfies (\ref{peak})
and since $u \in C((0,T) \times \mathbb{R})$ due to Sobolev embedding of $H^1(\mathbb{R})$
into $C(\mathbb{R})$, then $\xi \in C^1(0,T)$.
\end{proof}

In order to study the nonlinear evolution near a single peakon, we decompose the weak global
conservative solution (\ref{conservative-solution}) as the following sum of the peakon and its small perturbation:
\begin{equation}
\label{decomp}
u(t,x) = \varphi(x-t-a(t)) + v(t,x-t-a(t)), \quad t \in \mathbb{R}^+, \quad x \in \mathbb{R},
\end{equation}
where $v(t,\cdot) \in H^1$ for every $t \in \mathbb{R}^+$ is the perturbation
and $a(t)$ is the deviation of the perturbed peakon's peak
from its unperturbed position moving with the unit speed.
If there exists $T > 0$ such that $v(t,\cdot) \in C^1_0$ for every $t \in (0,T)$, the global
conservative solution (\ref{conservative-solution}) satisfies $u(t,\cdot + \xi(t)) \in C^1_0$ for every $t \in (0,T)$,
that is, it has a single peak at $\xi(t) = t + a(t)$.
By Lemma \ref{lem-nonlinear-2}, $a \in C^1(0,T)$ satisfies the equation
\begin{equation}
\label{peak-moves}
\frac{da}{dt} = v(t,0), \quad t \in (0,T).
\end{equation}
Substituting (\ref{decomp}) and (\ref{peak-moves}) into the Cauchy problem (\ref{CHevol}) yields
the following Cauchy problem for the peaked perturbation $v$ to the peakon $\varphi$:
\begin{equation}
\label{CHpert}
\left\{
\begin{array}{l}
v_t = (1 - \varphi) v_x + \varphi w + (v|_{x=0} - v) v_x - Q[v], \quad t \in (0,T),\\
v |_{t = 0} = v_0,
\end{array} \right.
\end{equation}
where $w(t,x) = \int_0^x v(t,y) dy$ and the linear evolution has been simplified
by using (\ref{tech-comp}) in Lemma \ref{lem-0}. Note that $x - t - a(t)$
in (\ref{decomp}) becomes $x$ in (\ref{CHpert}) thanks to the translational invariance of the system
(\ref{CHevol}) with (\ref{Q-def}).

\begin{remark}
\label{rem-3}
The dynamical equation (\ref{peak-moves})
cancels the last term in the linearization at the peakon (\ref{lin-op})
without additional requirement of $v |_{x=0} = 0$ imposed in Lemmas \ref{lem-2} and \ref{lem-3}.
\end{remark}

Related to the Cauchy problem (\ref{CHpert}), we define the family of characteristic coordinates $X(t,s)$
satisfying the initial-value problem:
\begin{equation}
\label{char-X}
\left\{
\begin{array}{l}
\frac{dX}{dt} = \varphi(X) - 1 + v(t,X) - v(t,0), \quad t \in (0,T),\\
X |_{t = 0} = s,
\end{array} \right.
\end{equation}
Along each characteristic curve parameterized by $s$, let us define
$V(t,s) := v(t,X(t,s))$. It follows from (\ref{CHpert})
and (\ref{char-X}) that $V(t,s)$ on each characteristic curve $x = X(t,s)$ satisfies the initial-value problem:
\begin{equation}
\label{char-V}
\left\{
\begin{array}{l}
\frac{dV}{dt} = \varphi(X) w(t,X) - Q[v](X), \quad t \in (0,T),\\
V |_{t = 0} = v_0(s).
\end{array} \right.
\end{equation}
The following lemma transfers well-posedness theory for differential equations to
the existence, uniqueness, and smoothness of the family of characteristic coordinates
and the solution surface.

\begin{lemma}
\label{lem-nonlinear-3}
Assume $v_0 \in H^1 \cap C^1_0$. There exists $T > 0$ (finite or infinite) such that the unique family of
characteristic coordinates $[0,T) \times \mathbb{R} \ni (t,s) \mapsto X \in \mathbb{R}$ to (\ref{char-X})
and the unique solution surface $[0,T) \times \mathbb{R} \ni (t,s) \mapsto V \in \mathbb{R}$ to (\ref{char-V})
exist as long as $v_x(t,\cdot) \in L^{\infty}(\mathbb{R})$ for $t \in [0,T)$.
Moreover, $X$ and $V$ are $C^1$ in $t$ and $C^1_0$ in $s$ for every $(t,s) \in [0,T) \times \mathbb{R}$.
\end{lemma}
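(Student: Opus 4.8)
The plan is to read Lemma \ref{lem-nonlinear-3} as a transfer result: the existence, uniqueness, and $H^1\cap W^{1,\infty}$ regularity of the perturbation $v$ solving (\ref{CHpert}) are already furnished by the local well-posedness theory of \cite{Linares} (together with the global conservative solution of \cite{BC1,BCZ}), and the genuine task is to convert this into existence, uniqueness, and $C^1_0$-regularity of the characteristic coordinates $X$ and the solution surface $V$, with the blow-up of $\|v_x\|_{L^\infty}$ as the sharp continuation criterion. Accordingly, I would fix the maximal time $T>0$ on which the unique solution $v\in C([0,T),H^1)\cap L^\infty_{\rm loc}([0,T),W^{1,\infty})$ of (\ref{CHpert}) exists, so that $v_x(t,\cdot)\in L^\infty(\mathbb{R})$ holds precisely for $t\in[0,T)$ and the continuation criterion is built into the definition of $T$. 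Note that $W^{1,\infty}$ regularity alone does not give the piecewise-$C^1$ structure at the peak; propagating the $C^1_0$ structure is the substantive content, and it is exactly what the characteristics deliver.

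First I would solve (\ref{char-X}) for each fixed $s$. Its right-hand side $G(t,X):=\varphi(X)-1+v(t,X)-v(t,0)$ is continuous in $t$ by the embedding $H^1\hookrightarrow C(\mathbb{R})$, is globally Lipschitz in $X$ with constant $1+\|v_x(t,\cdot)\|_{L^\infty}\in L^\infty_{\rm loc}([0,T))$, and is uniformly bounded since $|\varphi(X)-1|\le 1$ and $|v(t,X)-v(t,0)|\le 2\|v(t,\cdot)\|_{L^\infty}$. The Picard--Lindel\"of theorem then yields a unique $X(t,s)$ defined for all $(t,s)\in[0,T)\times\mathbb{R}$, with no finite-time escape thanks to the bound on $G$, and $C^1$ in $t$. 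Next I would record the invariance of the peak: since $G(t,0)=\varphi(0)-1+v(t,0)-v(t,0)=0$, the constant curve $X\equiv 0$ solves (\ref{char-X}) with $X|_{t=0}=0$, so $X(t,0)=0$ for all $t$ by uniqueness; this keeps the half-lines $s>0$ and $s<0$ invariant. With $X$ in hand, (\ref{char-V}) becomes a decoupled inhomogeneous ODE along each characteristic whose right-hand side $\varphi(X)w(t,X)-Q[v](X)$ is a continuous bounded function of $t$ (by Lemma \ref{lem-nonlinear-1} and the $H^1$ bound on $v$), so $V(t,s)$ is obtained by direct integration, is unique, and is $C^1$ in $t$. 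A short substitution of (\ref{CHpert}) shows that $v(t,X(t,s))$ satisfies (\ref{char-V}) with the same initial value $v_0(s)$ — all $v_x$ terms cancel against the transport term in (\ref{char-X}) — so by uniqueness $V(t,s)=v(t,X(t,s))$, identifying the solution surface with the PDE solution.

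For the regularity in $s$ I would differentiate the characteristic system. The variation $X_s$ solves the linear equation $\frac{d}{dt}X_s=[\varphi'(X)+v_x(t,X)]X_s$ with $X_s|_{t=0}=1$, giving $X_s(t,s)=\exp\!\big(\int_0^t[\varphi'(X(\tau,s))+v_x(\tau,X(\tau,s))]\,d\tau\big)$. Since $\|\varphi'\|_{L^\infty}=1$ and $\|v_x\|_{L^\infty}$ is finite on $[0,T)$, this keeps $X_s$ in a compact subinterval of $(0,\infty)$, so $s\mapsto X(t,s)$ is a bi-Lipschitz homeomorphism on each half-line fixing $0$. Because $v_0\in C^1_0$ and $v(t,\cdot)\in C^1_0$ while $\varphi'$ jumps at the origin, the integrand is continuous for $s\neq 0$ with distinct one-sided limits at $s=0$; hence $X_s(t,\cdot)\in C^1_0$, and the analogous linear equation for $V_s$ gives $V_s(t,\cdot)\in C^1_0$. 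This shows $X$ and $V$ are $C^1_0$ in $s$ and that the change of coordinates $(t,s)\mapsto(t,X)$ is $C^1_0$-invertible, so $v(t,\cdot)=V(t,\cdot)\circ X(t,\cdot)^{-1}\in H^1\cap C^1_0$ for $t\in[0,T)$.

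I expect the main obstacle to be the bookkeeping at the peak $s=0$: one must simultaneously show that $X(t,0)=0$ pins the characteristic through the peak, that the one-sided derivatives $X_s,V_s$ have finite but generally unequal limits as $s\to 0^\pm$ (so the $C^1_0$-jump is genuinely preserved rather than smoothed or blown up), and that all of the quantitative control — the Picard--Lindel\"of Lipschitz constant, the two-sided bounds on $X_s$, and the invertibility of the coordinate change — degenerates only in the limit $\|v_x(t,\cdot)\|_{L^\infty}\to\infty$. This last point is precisely what makes the blow-up of $v_x$ in $L^\infty$ the sharp continuation criterion, matching the wave-breaking criterion (\ref{blow-up-criterion}) and the $W^{1,\infty}$ breakdown in \cite{Linares}.
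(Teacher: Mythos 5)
Your proposal is correct and follows essentially the same route as the paper's proof: both treat (\ref{char-X}) and (\ref{char-V}) as ODEs with globally Lipschitz right-hand sides (solved by Picard--Lindel\"of piecewise for $s>0$ and $s<0$, with $X(t,0)=0$ invariant), derive the same exponential formula for $X_s$ from the variational equation (\ref{char-X-der}), and use positivity of $X_s$ under the $L^{\infty}$ bound on $v_x$ to make $(t,s)\mapsto(t,X)$ a $C^1_0$-invertible change of coordinates, transferring $V(t,\cdot)\in C^1_0$ back to $v(t,\cdot)\in H^1\cap C^1_0$. The only differences are bookkeeping: you anchor the existence of $v$ and the maximal time $T$ in the well-posedness theory of \cite{Linares} and identify $V=v\circ X$ by an explicit uniqueness argument, whereas the paper takes $v$ as the given conservative solution, phrases the construction as a self-consistent bootstrap (``as long as $v(t,\cdot)$ remains in $C^1_0$''), and obtains the $s$-regularity of $X$ and $V$ from the continuous dependence theorem.
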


\begin{proof}
A simple extension of the proof of Lemma \ref{lem-nonlinear-1} implies that if $f \in H^1 \cap C^1_0$,
then $Q[f] \in C^1_0$. Every $C^1_0$ function is Lipschitz continuous at $x = 0$.
In addition, since $v(t,\cdot) \in H^1$ for every $t > 0$, then $v(t,\cdot) \in L^{\infty}(\mathbb{R})$, hence
the function $v(t,\cdot)$ is globally Lipschitz continuous when it is locally Lipschitz continuous.
Since $\varphi$ have the same properties on $\mathbb{R}$ and $w(t,\cdot) \in C^1(\mathbb{R})$ if $v(t,\cdot) \in C(\mathbb{R})$,
the right-hand-sides of systems (\ref{char-X}) and (\ref{char-V}) are global Lipschitz continuous functions of $X$ as long as the solution
$v(t,\cdot) \in H^1$ remains $v(t,\cdot) \in C^1_0$ for $t \in [0,T)$ with some (finite or infinite) $T > 0$.
Existence and uniqueness of the classical solutions $X(\cdot,s) \in C^1(0,T)$ and
$V(\cdot,s) \in C^1(0,T)$ for every $s \in \mathbb{R}$ follows from the ODE theory.
By the continuous dependence theorem, $X(t,\cdot) \in C^1_0$ and $V(t,\cdot) \in C^1_0$
for every $t \in [0,T)$.

Let us now show that $v(t,\cdot) \in H^1 \cap C^1_0$ for $t \in [0,T)$
if $v_x(t,\cdot) \in L^{\infty}(\mathbb{R})$ for $t \in [0,T)$. By differentiating
(\ref{char-X}) piecewise for $s > 0$ and $s < 0$, we obtain
\begin{equation}
\label{char-X-der}
\left\{
\begin{array}{l}
\frac{d X_s}{dt} = \left[ \varphi'(X) + v_x(t,X) \right] X_s, \quad t \in (0,T),\\
X_s |_{t = 0} = 1.
\end{array} \right.
\end{equation}
with the solution
$$
X_s(t,s) = \exp\left(\int_0^t \left[ \varphi'(X(t',s)) + v_x(t,X(t',s)) \right] dt' \right).
$$
If $v_x(t,\cdot) \in L^{\infty}(\mathbb{R})$ for $t \in [0,T)$, then $X_s(t,s) > 0$ for $t \in [0,T)$
piecewise for $s > 0$ and $s < 0$, hence the change of coordinates $(t,s) \to (t,X)$ is a $C^1_0$ invertible
transformation. As a result, $V(t,\cdot) \in C^1_0$ implies that $v(t,\cdot) \in C^1_0$ for $t \in [0,T)$.
\end{proof}

Since $\varphi(0) = 1$ and $v(t,\cdot) \in C(\mathbb{R})$ for every $t \in \mathbb{R}^+$,
$X = 0$ is a critical point of the initial-value problem (\ref{char-X}). Therefore,
the unique solution of Lemma \ref{lem-nonlinear-3} for $s = 0$ satisfies $X(t,0) = 0$. This limiting characteristic curve separates
the family of characteristic curves with $s > 0$ and $s < 0$. Since $V(t,s)$ is $C^1$ in $t$
for every $s \in \mathbb{R}$ and Lipschitz continuous at $s = 0$ by Lemma \ref{lem-nonlinear-3},
the limiting value $V_0(t) := V(t,0)$ is $C^1$ in $t$ and satisfies
\begin{equation}
\label{V-0}
\frac{d V_0}{dt} = - Q[v](0),
\end{equation}
where we have used $w(t,0) = 0$.

In order to control $v_x(t,\cdot) \in L^{\infty}(\mathbb{R})$ for $t \in [0,T)$
needed in the condition of Lemma \ref{lem-nonlinear-3}, we differentiate (\ref{CHpert}) in $x$ and obtain
\begin{equation}
\label{CHder}
\left\{
\begin{array}{l}
v_{xt} = (1 - \varphi) v_{xx} - \varphi' v_x + \varphi v + \varphi' w
+ (v|_{x=0} - v) v_{xx} - \frac{1}{2} v_x^2 + v^2 - P[v], \quad t \in (0,T),\\
v_x |_{t = 0} = v_0',
\end{array} \right.
\end{equation}
where we have used $(1 - \partial_x^2) \varphi = 2 \delta$ and have defined
\begin{equation}
\label{P-def}
P[v](x) := \frac{1}{2} \int_{\mathbb{R}} \varphi(x-y) \left( [v(y)]^2 + \frac{1}{2} [v'(y)]^2 \right) dy, \quad x \in \mathbb{R}.
\end{equation}
It follows from (\ref{char-X}) and (\ref{CHder}) that $U(t,s) := v_x(t,X(t,s))$ on each characteristic curve
satisfies the initial-value problem:
\begin{equation}
\label{char-U}
\left\{
\begin{array}{l}
\frac{dU}{dt} = -\varphi'(X) U + \varphi(X) V + \varphi'(X) w(t,X) - \frac{1}{2} U^2 + V^2 - P[v](X), \quad t \in (0,T),\\
U |_{t = 0} = v_0'(s).
\end{array} \right.
\end{equation}
Although $\varphi'(X)$ has a jump discontinuity at $X = 0$, the regions $\mathbb{R}^+$ and $\mathbb{R}^-$
for $s$ are separated from each other thanks to the fact that the limiting characteristic curve at $X = 0$
corresponds to the critical point of the initial-value problem (\ref{char-X}). As a result,
we consider the initial-value problem (\ref{char-U}) separately for $s > 0$ and $s < 0$.

\begin{lemma}
\label{lem-nonlinear-4}
Assume $v_0 \in H^1 \cap C^1_0$ and $T > 0$ (finite or infinite) be given by Lemma \ref{lem-nonlinear-3}.
There exist unique solutions $[0,T) \times \mathbb{R}^+ \ni (t,s) \mapsto U^+ \in \mathbb{R}$
and $[0,T) \times \mathbb{R}^- \ni (t,s) \mapsto U^- \in \mathbb{R}$ to (\ref{char-U})
as long as $v_x(t,\cdot) \in L^{\infty}(\mathbb{R})$ for $t \in [0,T)$.
Moreover, $U^{\pm}$ are $C^1$ in $t$ and $s$
for every $(t,s) \in [0,T) \times \mathbb{R}^{\pm}$.
\end{lemma}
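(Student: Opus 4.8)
The plan is to treat (\ref{char-U}) as a scalar Riccati-type ordinary differential equation in $U$, parametrized by $s$, whose coefficients are already known and regular from Lemma \ref{lem-nonlinear-3}, and to invoke classical ODE theory on each sign region separately. The first and decisive observation is that the limiting characteristic $X(t,0) = 0$ is a critical point of (\ref{char-X}) separating the curves with $s > 0$ from those with $s < 0$, so that $X(t,s) > 0$ for all $(t,s) \in [0,T) \times \mathbb{R}^+$ and $X(t,s) < 0$ for all $(t,s) \in [0,T) \times \mathbb{R}^-$. On the positive region $\varphi'(X) = -e^{-X}$ and on the negative region $\varphi'(X) = e^{X}$, both smooth in $(t,s)$; thus the jump of $\varphi'$ at $X = 0$ never enters the equation and the two problems for $U^+$ and $U^-$ are genuinely decoupled and can be posed on the open half-lines.

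Fixing a sign, say $s > 0$, I would write (\ref{char-U}) as
\begin{equation*}
\frac{dU}{dt} = -\frac{1}{2} U^2 - \varphi'(X(t,s)) U + g(t,s), \quad U|_{t=0} = v_0'(s),
\end{equation*}
where $g(t,s) := \varphi(X) V + \varphi'(X) w(t,X) + V^2 - P[v](X)$ collects the terms independent of $U$. By Lemma \ref{lem-nonlinear-3} the quantities $X$, $V$, and $w(t,X)$ are $C^1$ in $t$ and $C^1_0$ in $s$ on $[0,T) \times \mathbb{R}^+$; moreover $P[v] \in C^1(\mathbb{R})$, since $P[v]' = Q[v]$ is continuous by Lemma \ref{lem-nonlinear-1}, so the composition $P[v](X(t,s))$ inherits the same regularity and hence $g$ is as regular as $X$ and $V$. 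Since the right-hand side is a polynomial in $U$ with continuous coefficients, it is jointly continuous and locally Lipschitz in $U$; Picard--Lindel\"of then yields a unique maximal solution $U^+(\cdot,s) \in C^1$ for each $s > 0$.

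The only genuine danger is the Burgers-type quadratic term $-\frac{1}{2} U^2$, which can drive a Riccati solution to $-\infty$ in finite time; this is precisely the mechanism of wave breaking. The hypothesis $v_x(t,\cdot) \in L^{\infty}(\mathbb{R})$ on $[0,T)$ is exactly what excludes it: since $U(t,s) = v_x(t,X(t,s))$ along the characteristic, $|U(t,s)| \leq \| v_x(t,\cdot) \|_{L^{\infty}}$ remains finite on every compact subinterval of $[0,T)$, so the blow-up alternative for the maximal ODE solution cannot be triggered before $T$ and $U^+$ extends to all of $[0,T) \times \mathbb{R}^+$. The joint $C^1$-regularity in $(t,s)$ then follows from the theorem on differentiable dependence of ODE solutions on initial data and parameters: the datum $v_0'(s)$ together with the coefficients $\varphi'(X(t,s))$ and $g(t,s)$ depend on the parameter $s$ as regularly as the solution surface of Lemma \ref{lem-nonlinear-3} permits, while $C^1$-dependence in $t$ is immediate from the equation itself.

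Repeating the argument verbatim on $s < 0$ produces $U^-$ with identical properties, completing the proof. I expect the main obstacle to be bookkeeping rather than depth: one must verify that the nonlocal forcing $P[v](X)$ inherits the correct regularity in $(t,s)$ from the already-constructed solution $v$, and confirm that the region-separation truly removes the discontinuity of $\varphi'$ at $X = 0$, so that the standard Picard--Lindel\"of and smooth-dependence theorems apply cleanly on each half-line. The conceptually important point, rather than a technical difficulty, is the identification of the quadratic term as the agent of finite-time blow-up, which ties the condition $v_x \in L^{\infty}$ directly to the wave-breaking criterion.
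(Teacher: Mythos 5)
Your proposal is correct and follows essentially the same route as the paper's proof: separate the regions $s>0$ and $s<0$ using the invariant characteristic $X(t,0)=0$ so that the jump of $\varphi'$ never enters, establish that $P[v]\in C^1(\mathbb{R})$ and that the coefficients inherit regularity from Lemma \ref{lem-nonlinear-3}, and then invoke Picard--Lindel\"of together with the (continuous/differentiable) dependence theorems. Your explicit treatment of the Riccati blow-up alternative via the identification $U(t,s)=v_x(t,X(t,s))$ simply spells out the role of the hypothesis $v_x(t,\cdot)\in L^{\infty}(\mathbb{R})$, which the paper's much terser proof leaves implicit.
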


\begin{proof}
Similarly to the proof of Lemma \ref{lem-nonlinear-1}, it follows that if $f \in H^1$,
then $P[f] \in C^1(\mathbb{R})$. In addition, $\varphi'(X(t,s))$ are $C^1$ functions of $t$ and $s$
for $t \in [0,T)$ and separately for $s \in \mathbb{R}^+$ and $s \in \mathbb{R}^-$.
Existence and uniqueness of solutions $[0,T) \times \mathbb{R}^+ \ni (t,s) \mapsto U^+ \in \mathbb{R}$
and $[0,T) \times \mathbb{R}^- \ni (t,s) \mapsto U^- \in \mathbb{R}$ to (\ref{char-U}) follow from the ODE
theory. By the continuous dependence theorem, $U^{\pm}$ are $C^1$ in $t$ and $s$
for every $(t,s) \in [0,T) \times \mathbb{R}^{\pm}$.
\end{proof}

By Lemma \ref{lem-nonlinear-4}, we are allowed to define the one-sided limits
$U_0^{\pm}(t) := \lim_{s \to 0^{\pm}} U^{\pm}(t,s)$ for $t \in [0,T)$.
The functions $U_0^{\pm}$ are  $C^1$ in $t$ and satisfy for $t \in (0,T)$:
\begin{equation}
\label{U-0}
\frac{d U_0^{\pm}}{dt} = \pm U_0^{\pm} + V_0 - \frac{1}{2} (U_0^{\pm})^2 + V_0^2 - P[v](0).
\end{equation}
By analyzing the time evolution (\ref{V-0}) and (\ref{U-0}), we finally prove
the nonlinear instability of peaked perturbations to the $H^1$-orbitally stable peakon.

\begin{lemma}
\label{lem-nonlinear-5}
For every $\delta > 0$, there exist $t_0 > 0$ and $v_0 \in H^1 \cap C^1_0$
satisfying
\begin{equation}
\label{initial-bound}
\| v_0 \|_{L^{\infty}} + \| v_0' \|_{L^{\infty}} < \delta,
\end{equation}
such that the unique solution $v(t,\cdot) \in H^1 \cap C^1_0$ to the Cauchy problem (\ref{CHpert})
in Lemmas \ref{lem-nonlinear-3} and \ref{lem-nonlinear-4} satisfies
\begin{equation}
\label{final-bound}
\| v_x(t_0,\cdot) \|_{L^{\infty}} > 1.
\end{equation}
\end{lemma}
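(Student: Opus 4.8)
The plan is to reduce the instability to the scalar ODE (\ref{U-0}) governing the one-sided boundary value $U_0^+(t) = \lim_{s\to0^+} v_x(t,X(t,s)) = \lim_{x\to 0^+} v_x(t,x)$, since by Lemma \ref{lem-nonlinear-4} we have $\|v_x(t,\cdot)\|_{L^\infty} \geq U_0^+(t)$ as long as the solution persists. Writing (\ref{U-0}) as
\[
\frac{dU_0^+}{dt} = U_0^+ - \tfrac12 (U_0^+)^2 + G(t), \qquad G(t) := V_0 + V_0^2 - P[v](0),
\]
the mechanism becomes transparent: the linear term $+U_0^+$ is exactly the exponential amplification to the right of the peak already found in Lemma \ref{lem-3}, now reinforced by the Riccati term $-\tfrac12 (U_0^+)^2$, which can only accelerate the departure once $U_0^+$ grows (and which, were $U_0^+$ driven negative, would instead force wave breaking). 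I would start from $U_0^+(0) = v_0'(0^+) =: \alpha > 0$ and show that $U_0^+$ is pushed past the level $1$ in finite time, provided the source $G(t)$ stays small.

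The whole argument therefore rests on controlling $G(t)$, and here I would invoke the $H^1$-orbital stability of Theorem \ref{theorem-CS} for the very perturbation whose $C^1_0$-norm we intend to blow up. If $\|v_0\|_{H^1} < (\varepsilon/3)^4$, then $\|v(t,\cdot)\|_{H^1} < \varepsilon$ for all $t \in [0,T)$, whence by Sobolev embedding $|V_0(t)| = |v(t,0)| \leq C_S \varepsilon$ and, since $\|\varphi\|_{L^\infty} = 1$, the nonnegative term $P[v](0) = \tfrac12\int \varphi (v^2 + \tfrac12 v_x^2)\,dy \leq \tfrac12 \|v(t,\cdot)\|_{H^1}^2 < \tfrac12\varepsilon^2$ from (\ref{P-def}). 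Hence $|G(t)| \leq C_\varepsilon := C_S\varepsilon + C_S^2\varepsilon^2 + \tfrac12\varepsilon^2$, with $C_\varepsilon \to 0$ as $\varepsilon \to 0$. This is the conceptual heart: the $H^1$ bound that forbids growth of the $H^1$-norm is precisely what keeps the lower-order terms in (\ref{U-0}) negligible and thereby frees the $+U_0^+$ amplification to act in the supremum norm of $v_x$.

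With $|G| < C_\varepsilon$ in hand, on the interval where $U_0^+ \in [\alpha,1]$ the map $U \mapsto U - \tfrac12 U^2$ is increasing and bounded below by $\alpha - \tfrac12\alpha^2 > \tfrac{\alpha}{2}$, so choosing $\varepsilon$ small enough that $C_\varepsilon < \tfrac{\alpha}{4}$ gives $\tfrac{dU_0^+}{dt} \geq \tfrac{\alpha}{4} > 0$. Thus $U_0^+$ cannot leave $[\alpha,1]$ through the bottom and must reach $1$ by the finite time $t_* := 4(1-\alpha)/\alpha$, and since $\tfrac{dU_0^+}{dt} \geq \tfrac12 - C_\varepsilon > 0$ at $U_0^+ = 1$, it strictly exceeds $1$ shortly after. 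I would then split on the maximal time $T$ from Lemma \ref{lem-nonlinear-3}: if $T > t_*$ the comparison is valid throughout $[0,t_*]$ and yields $t_0$ with $\|v_x(t_0,\cdot)\|_{L^\infty} \geq U_0^+(t_0) > 1$; if instead $T \leq t_*$ then $T < \infty$, which by Lemma \ref{lem-nonlinear-3} means $\|v_x(t,\cdot)\|_{L^\infty} \to \infty$ as $t \to T^-$, so again $\|v_x(t_0,\cdot)\|_{L^\infty} > 1$ at some $t_0 < T$. Either scenario delivers (\ref{final-bound}).

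The main obstacle is the quantitative compatibility of the two smallness requirements. The Riccati argument needs $\alpha = v_0'(0^+)$ fixed at a moderate value (so that the growth rate $\sim\alpha$ beats the source over the time $\sim 1/\alpha$ needed to climb from $\alpha$ to $1$), whereas Theorem \ref{theorem-CS} demands $\|v_0\|_{H^1} < (\varepsilon/3)^4$ with $\varepsilon \lesssim \alpha$, i.e.\ an $H^1$-norm far smaller than $\alpha$. These are reconciled by decoupling the slope from the energy with a thin bump: taking $v_0(x) = \alpha x\, e^{-x/\ell}$ for $x \geq 0$ and $v_0(x) = 0$ for $x < 0$ gives $v_0 \in H^1\cap C^1_0$ with $v_0(0)=0$, $v_0'(0^+) = \alpha$, $\|v_0'\|_{L^\infty} = \alpha$, while $\|v_0\|_{L^\infty} = \alpha\ell/e$ and $\|v_0\|_{H^1} = O(\alpha\sqrt{\ell})$. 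Fixing $\alpha = \min\{\delta/3,\tfrac12\}$ and then sending $\ell \to 0$ makes $\|v_0\|_{L^\infty}$ and $\|v_0\|_{H^1}$ as small as needed while the slope $\alpha$ stays fixed; choosing in order $\alpha$, then $\varepsilon$ with $C_\varepsilon < \alpha/4$, then $\ell$ small enough that $\|v_0\|_{L^\infty} < \delta/3$ and $\|v_0\|_{H^1} < (\varepsilon/3)^4$ closes the argument and secures (\ref{initial-bound}). A minor point to verify along the way is that the peak-tracking of Lemma \ref{lem-nonlinear-2} and Remark \ref{rem-3} keeps $v(t,0) = V_0(t)$ identified with the value at the moving peak, so that the $L^\infty$ control of $V_0$ via Theorem \ref{theorem-CS} is legitimate.
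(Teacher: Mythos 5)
Your proposal is correct, and it reaches (\ref{final-bound}) by a genuinely different mechanism than the paper, although both arguments share the same scaffolding: the peak ODEs (\ref{V-0}) and (\ref{U-0}) from Lemmas \ref{lem-nonlinear-3} and \ref{lem-nonlinear-4}, the use of Theorem \ref{theorem-CS} to keep $V_0$ and the nonlocal terms of size $O(\varepsilon)$ for all time, and the dichotomy on the maximal time $T$. The paper takes the initial one-sided slope \emph{negative}, $\alpha = \lim_{x\to 0^+}v_0'(x) = -2\varepsilon^2$, and tracks the combination $V_0 + U_0^+$ rather than $U_0^+$ alone: in the resulting equation (\ref{V-U-0}) both quadratic contributions $F_0 = -Q[v](0)-P[v](0) \le 0$ and $-\tfrac12 (U_0^+)^2 \le 0$ have a favorable sign, so a single integrating factor gives the one-sided bound $V_0(t)+U_0^+(t) \le e^t\left(\alpha + \varepsilon^2\right) = -\varepsilon^2 e^t$, i.e.\ genuine exponential growth that clears any threshold by time $\tau = \log 2 - 2\log\varepsilon$. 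You instead take $\alpha > 0$ of fixed moderate size $\sim\delta$, so the Riccati term $-\tfrac12 (U_0^+)^2$ \emph{opposes} the growth, and you must confine $U_0^+$ to $[\alpha,1]$, where the drift $U - \tfrac12 U^2 \ge \alpha/2$ dominates the source $C_\varepsilon < \alpha/4$; this comparison argument is sound, and your explicit thin-bump data $v_0(x)=\alpha x e^{-x/\ell}$ makes concrete the compatibility of a fixed slope with an arbitrarily small $H^1$ norm (the paper needs data of exactly this kind for (\ref{eq-alpha}) but leaves the construction implicit). What each approach buys: the paper's sign choice yields unbounded exponential growth of $|U_0^+|$, matching the wave-breaking direction $u_x \to -\infty$ of the CH equation and working for any threshold in (\ref{final-bound}); your positive-slope mechanism self-saturates, since $U - \tfrac12 U^2 + G$ has a stable zero near $U = 2$, so it clears the threshold $1$ (in the shorter time $O(1/\delta)$, uniformly in $\varepsilon$) but would fail if (\ref{final-bound}) demanded, say, a supremum larger than $2$. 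One caveat applies equally to both proofs: bounding $V_0$ and $P[v](0)$ through Theorem \ref{theorem-CS} requires identifying the peak location $\xi(t)$ with the maximum point of $u(t,\cdot)$; the paper asserts this from smallness of $v$ in the $C^1_0$ norm, and you flag it without carrying it out --- note that in your scenario the identification is only safe while $U_0^+ \le 1$ (so that the slope of $u$ just to the right of the peak stays nonpositive), which fortunately is exactly the interval on which you need the bound on $G$.
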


\begin{proof}
Combining (\ref{V-0}) and (\ref{U-0}) together yields the following equation for $t \in (0,T)$:
\begin{equation}
\label{V-U-0}
\frac{d}{dt} (V_0 + U_0^+) = (V_0 + U_0^+) + V_0^2 - \frac{1}{2} (U_0^+)^2 + F_0,
\end{equation}
where
\begin{eqnarray*}
F_0 & := & - Q[v](0) - P[v](0) \\
& = & \frac{1}{2} \int_{-\infty}^{+\infty} \left[\varphi'(y) - \varphi(y) \right]
 \left( [v(t,y)]^2 + \frac{1}{2} [v_y(t,y)]^2 \right) dy, \\
& = & - \int_{0}^{+\infty} e^{-y} \left( [v(t,y)]^2 + \frac{1}{2} [v_y(t,y)]^2 \right) dy,
\end{eqnarray*}
and we have used $\varphi(-y) = \varphi(y)$, $\varphi'(-y) = - \varphi(y)$.
By using an integrating factor, we rewrite (\ref{V-U-0}) in the equivalent form
$$
\frac{d}{dt} \left[ e^{-t} (V_0 + U_0^+) \right] = e^{-t} \left[ V_0^2 - \frac{1}{2} (U_0^+)^2 + F_0 \right]
\leq e^{-t} V_0^2,
$$
where the last inequality is due to $F_0 \leq 0$. Integrating the differential inequality yields the bound:
\begin{equation}
\label{diff-ineq}
V_0(t) + U_0^+(t) \leq e^{t} \left[  V_0(0) + U_0^+(0) + \int_0^t e^{-\tau} V_0^2(\tau) d\tau \right], \quad t \in [0,T).
\end{equation}

Since $\varphi'(x) \to \mp 1$ as $x \to \pm 0$ and $\varphi(x) \to 0$ as $|x| \to \infty$,
whereas $v(t,\cdot)$ is small in the $C^1_0$ norm at least for $t \in [0,T)$,
the coordinate $\xi(t) = t + a(t)$ for the peak's location coincides with the location
of the maximum of $u(t,\cdot)$ in the decomposition (\ref{decomp}) for $t \in [0,T)$.
By Theorem \ref{theorem-CS}, for every small $\varepsilon > 0$,
if $\| v_0 \|_{H^1} < (\varepsilon/3)^4$, then
$$
\| v(t,\cdot) \|_{H^1} < \varepsilon, \quad t \in (0,T),
$$
By Sobolev's embedding, we have
\begin{equation}
\label{bound-on-V0}
|V_0(t)| \leq \| v(t,\cdot) \|_{L^{\infty}} \leq \frac{1}{\sqrt{2}} \| v(t,\cdot) \|_{H^1} < \varepsilon.
\end{equation}
Let us assume that the initial data $v_0 \in H^1 \cap C^1_0$ satisfies
$v_0(0) = 0$ and
\begin{equation}
\label{eq-alpha}
\alpha := \lim_{x \to 0^+} v_0'(x) = -\| v_0' \|_{L^{\infty}} = -2 \varepsilon^2.
\end{equation}
The initial bound (\ref{initial-bound}) is consistent with (\ref{eq-alpha}) if
for every small $\delta > 0$, the small value of $\varepsilon$ satisfies
$$
\left(\frac{\varepsilon}{3} \right)^4 + 2 \varepsilon^2 < \delta,
$$
which just specifies $\varepsilon$ in terms of $\delta$. With these constraints on $v_0$,
the bound (\ref{diff-ineq}) yields
\begin{equation*}
V_0(t) + U_0^+(t) \leq e^{t} \left[  \alpha + \varepsilon^2 \right] = - \varepsilon^2 e^{t},
\end{equation*}
or equivalently, $|V_0(t) + U_0^+(t)| \geq \varepsilon^2 e^{t}$. Hence, for every
small $\varepsilon > 0$ there exists sufficiently large
$$
\tau := \log(2) - 2 \log(\varepsilon) > 0,
$$
such that $|V_0(\tau) + U_0^+(\tau)| \geq 2$. This implies $|U_0^+(\tau)| > 1$ thanks to the bound (\ref{bound-on-V0}).

If $T < \tau$, then $v_x(t,x) \to -\infty$ for some $x \in \mathbb{R}$ as $t \nearrow T$
by the condition of Lemma \ref{lem-nonlinear-3}. Therefore,
there exists $t_0 \in (0,T)$ such that the bound (\ref{final-bound}) is true.
If $T > \tau$, then the differential equation (\ref{V-U-0}) is valid for $t \in [0,T)$
by Lemmas \ref{lem-nonlinear-3} and \ref{lem-nonlinear-4} so that
the bound (\ref{final-bound}) is true at $t_0 = \tau$ thanks to the bound
$$
\| v_x(t,\cdot) \|_{L^{\infty}} = \| U(t,\cdot) \|_{L^{\infty}} \geq |U_0^+(t)|, \quad t \in [0,T).
$$
In both cases, the bound (\ref{final-bound}) is proven.
\end{proof}

\begin{remark}
The result of Lemma \ref{lem-nonlinear-5} gives the proof of Theorem \ref{theorem-nonlinear} thanks to the
representation (\ref{decomp}) and the characteristic equation (\ref{peak-moves}) at the peak's location at $x = \xi(t)$.
\end{remark}

\section{Conclusion}

We have shown that the passage from linear to nonlinear stability of peakons in $H^1$ is false for the Camassa--Holm
equation because the linear result of Theorem \ref{theorem-linear} gives linearized instability in $H^1$ whereas
the result of Theorem \ref{theorem-CS} gives nonlinear stability of peakons in $H^1$.
On the other hand, we show that the linearized instability in $H^1 \cap C^1_0$ persists
as the nonlinear instability result of Theorem \ref{theorem-nonlinear}. The latter result
is natural for the Camassa--Holm equation where smooth solutions may break in a finite
time with the slopes becoming unbounded from below.

We conclude the paper with possible extensions of our main results.
It is quite natural to prove instability of the peaked periodic waves
with respect to the peaked periodic perturbations in the framework of the CH equation (\ref{CH}).
Furthermore, the same instability is likely to hold for peakons in the
Degasperis--Procesi equation, although the linearized evolution
may not be as simple as the one for the CH equation. Finally, the method of characteristics
is likely to work to prove nonlinear instability of peaked periodic wave
in the reduced Ostrovsky equation (\ref{redOst}), which has been an open problem up to now.

\vspace{0.25cm}

{\bf Acknowledgements.} This project was initiated in collaboration with Y.Liu and G. Gui
during the visit of D.E. Pelinovsky to North-West University at Xi'an in June 2018. DEP thanks
the collaborators for useful comments. The project was completed during the visit of F. Natali to McMaster
University supported by CAPES grant. D.E. Pelinovsky is supported by the NSERC Discovery grant.

\end{document}